\newtheorem{thm}{Theorem}[section]
\newtheorem{lem}[thm]{Lemma}
\newtheorem{cor}[thm]{Corollary}
\newtheorem*{rem}{Remark}
\numberwithin{equation}{section}
\def\F{\mathbb{F}_q}
\def\AA{\mathbb{A}^1}
\def\FF{\mathbb{F}_{q^2}}
\def\C{\mathcal{C}}
\def\B{\mathcal{B}}
\def\3F2{{}_3\hspace{-1pt}F_2}
\def\2F1{{}_2\hspace{-1pt}F_1}
\def\h2F1{{}_2\hspace{-1pt}\widehat{F}_1}
\def\f{\F^*}
\def\oalpha_c{\overline{\alpha_c}}
\def\l({\left(}
\def\r){\right)}
\def\bar{\begin{array}{r|}}
\def\ear{\end{array}}
\def\ord{\mathrm{ord}}
\def\mod{\ \mathrm{mod} \ }
\title{Irreducible factorization of translates of reversed Dickson 
polynomials over finite fields}
\author{\\ \\ Ron Evans\\
Department of Mathematics\\
University of California at San Diego\\
La Jolla, CA  92093-0112 \\
revans@ucsd.edu
\\ \\
and
\\ \\
Mark Van Veen\\
Varasco LLC \\
2138 Edinburg Avenue\\
Cardiff by the Sea, CA 92007  \\
mark@varasco.com
}
\date{February 2018}
\begin{document}

\maketitle

\noindent 2010 \textit{Mathematics Subject Classification}.
11T06, 12E10, 13P05.

\noindent \textit{Key words and phrases}.
reversed Dickson polynomials over finite fields, 
irreducible factorization of polynomials,
second order linear recurring sequence,
quadratic residuacity.

\newpage

\begin{abstract}

Let $\F$ be a field of $q$ elements,
where $q$ is a power of an odd prime.
Fix $n = (q+1)/2$. For each $s \in \F$,
we describe all the irreducible factors over $\F$ of
the polynomial
$g_s(y): = y^n + (1-y)^n -s$, and we give a necessary and
sufficient condition on $s$ for $g_s(y)$ to be irreducible.
\end{abstract}

\section{Introduction}
Let $\F$ be a field of $q$ elements, 
where $q$ is a power of an odd prime $p$. 
Fix
\begin{equation}\label{eq:1.1}
n=(q+1)/2, 
\end{equation}
and define a polynomial $f(y) \in \F[y]$ of degree $[n/2]$ by
\[
f(y): = (1+\sqrt{y})^n + (1-\sqrt{y})^n = D_n(2,1-y),
\]
where $D_n(2,1-y)$ is a reversed Dickson polynomial \cite[eq.(1)]{HMSY}.
Our choice of $n$ in \eqref{eq:1.1}
was motivated by Katz's work on local systems
\cite{Katz}.
Indeed, by \cite[Lemma 2.1]{EV}, $f(y)$ satisfies the equality
\begin{equation}\label{eq:1.2}
f(y)^2 = 2y^n + 2(1-y)^n +2,
\end{equation}
which was instrumental in proving a theorem
of Katz relating two twisted local systems \cite[Theorem 16.8]{Katz}.

For each $s \in \F$, define the polynomial $g_s(y) \in \F[y]$ 
of degree $2[n/2]$ by 
\begin{equation}\label{eq:1.3}
g_s(y): = y^n + (1-y)^n -s = (f(y)^2 -2s-2)/2.
\end{equation}
Observe that
$g_s(y)$ is a translate
of the reversed Dickson polynomial
$g_0(y) =D_n(1,y-y^2)$ \cite[eq.(3)]{HMSY}.
For any zero $x$ of $g_s(y)$, \eqref{eq:1.3}
can be written as
\begin{equation}\label{eq:1.4}
g_s(y) = (f(y)^2 -f(x)^2)/2.
\end{equation}
By \eqref{eq:1.4} and \cite[Remark 2]{EV}, 
the zeros of $g_s(y)$ are all distinct
when $s \ne \pm 1$.

The goal of this paper is to describe the irreducible factorization
of $g_s(y)$ over $\F$, for each $s \in \F$.
We remark that irreducible factorizations of classical Dickson polynomials
over $\F$ have been given by Bhargava and Zieve \cite[Theorem 3]{BZ};
for related work, see the references in \cite[Section 9.6.2]{WY}.

Our study of the irreducible factors of $g_s(y)$ was initially motivated
by the following conjecture of the second author:

\noindent {\em For $s \in \{\pm 1/2\}$
and $q \equiv \pm 1 \pmod {12}$, every irreducible factor of 
$g_s(y)$  over $\F$ has the form 
$y^3 - (3/2)y^2 + (9/16)y - m$ 
for some $m \in \F$}.

\noindent For example, over $\mathbb{F}_{13}$, we have
the complete factorizations
\begin{equation}\label{eq:1.5}
\begin{split}
g_{-1/2}(y) &= y^7 +(1-y)^7 + 7=7(y^3+5y^2+3y+1)(y^3+5y^2+3y+3),\\
g_{1/2}(y)&= y^7 +(1-y)^7 - 7=7(y^3+5y^2+3y+6)(y^3+5y^2+3y+11).
\end{split}
\end{equation}
We found such formulas  intriguing,
as we initially saw no reason why the zeros of
$y^n + (1-y)^n \pm 1/2 $ should have degree 3 over $\F$ when
$q \equiv \pm 1 \pmod {12}$, nor did we understand why
all of the monic irreducible cubic 
factors over $\F$ should be identical except for
their constant terms.  

In Section 2, we present the irreducible factorizations of $g_s(y)$ 
corresponding to those $s$ for which the 
irreducible factors all have degree $\le 2$.
Before dealing with the more difficult case involving irreducible
factors of degree greater than $2$, 
we discuss properties of a 
second order linear recurring sequence $S$ in Section 3.
The sequence $S$ 
plays a crucial role in our proofs, although at first glance
it appears to have little to do with $g_s(y)$.

Our main results appear in Section 4.
Theorem 4.4 shows that if $1-s^2$ is a square $\ne 1$ in $\f$,
then every monic irreducible factor $I(y)$ of $g_s(y)$ has 
the same degree $d=e >2$, where $e$ is 
the period of the sequence $S$.  Theorem 4.5 then shows that
these factors $I(y)$ are all identical except for their constant
terms. Theorem 4.5 also gives formulas in terms of $s$
for the coefficients of the nonconstant terms of the factors $I(y)$; 
these formulas
are made explicit in Corollaries 4.6--4.12 
for some specific values of $s$, yielding all cases where  
$d$ is in the set $\{3,4,5,6,8,10,12\}$.
Corollary 4.6
in particular verifies the aforementioned conjecture of the second author.
Corollary 4.13 gives a necessary and sufficient condition 
on $s$ for the irreducibility of $g_s(y)$. 
We remark that Gao and Mullen \cite{GM} gave necessary and sufficient
conditions for the irreducibility of
translates of classical Dickson polynomials 
over $\F$.

Let $1-s^2$ be a square $\ne 1$ in $\f$.
Then the monic irreducible factors
$I(y)$ of $g_s(y)g_{-s}(y)$ 
are polynomials of degree $d>2$ that are all
identical  except for their constant terms.  
When $d$ is odd, Theorem 5.1 gives a
criterion that distinguishes the constant terms corresponding to $g_s(y)$
from those corresponding to $g_{-s}(y)$.
This will explain, for example, why the 
constant terms of the cubic factors in the first
equality of
\eqref{eq:1.5} had to be quadratic residues in $\mathbb{F}_{13}$, 
while those in the
second equality had to be quadratic non-residues.

\section{Linear and quadratic irreducible factors}

We first determine the irreducible factorization of $g_s(y)$ 
when $s \in \{\pm 1\}$.
Let $\rho$ denote the quadratic character on $\F$.
When $s=1$, \cite[Lemma 2.3]{EV} yields
\begin{equation}\label{eq:2.1}
2g_s(y) = 
f(y)^2-4 = \tau^2 y(y-1)\prod\limits_{a \in \C}(y-a)^2,
\end{equation}
where
$\tau$, the leading coefficient of $f$, is given by
\begin{equation}\label{eq:2.2}
\tau =
\begin{cases}
1, &\mbox{  if  } q \equiv 1 \mod 4 \\
2, &\mbox{  if  } q \equiv 3 \mod 4,
\end{cases}
\end{equation}
and the set $\C$ is defined by
\begin{equation}\label{eq:2.3}
\C: = \{a \in \F : \rho(a) = \rho(1-a) = 1\}.
\end{equation}
When $s=-1$, it follows from  \cite[Remark 3]{EV} that
\begin{equation}\label{eq:2.4}
2g_s(y) = f(y)^2 = \tau^2 \prod\limits_{j}(y-j)^2,
\end{equation}
where the product is over all $j \in \F$ for which
$\rho(j) = \rho(1-j) = -1$.
(The factor $\tau^2$ was inadvertently omitted in
\cite[Remark 3]{EV}.)
In summary, the irreducible factors of $g_s(y)$ 
have been completely determined when 
$s \in \{\pm 1\}$, and they are all linear.   

The case $s=0$ will be handled in Theorem 2.4.
The following theorem gives the 
irreducible factorization of $g_s(y)$ 
in the case that $g_s(y)$
has a zero in $\F$ and $s \notin \{0,\pm 1\}$.

\begin{thm}\label{Theorem 2.1}
Let $s \notin \{0, \pm 1\}$. 
The polynomial $g_s(y)$ has a zero in $\F$ if and only if
\begin{equation}\label{eq:2.5}
\rho((1+s)/2)=1, \quad  \rho((1-s)/2)=-1.
\end{equation}
When \eqref{eq:2.5} holds, $(1+s)/2$ and $(1-s)/2$ are the only zeros
in $\F$, and $g_s(y)$ has the irreducible factorization
\begin{equation}\label{eq:2.6}
\begin{split}
&g_s(y)= \\
&\frac{\tau^2}{2} \Big(y-\frac{1+s}{2}\Big)\Big(y-\frac{1-s}{2}\Big)
\prod\limits_{a \in \C}\Big(y^2 + (2as-1-s)y +\frac{(2a-1-s)^2}{4} \Big).
\end{split}
\end{equation}
\end{thm}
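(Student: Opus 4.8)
The plan is to work from the identity $2g_s(y) = f(y)^2 - 2s - 2$ in \eqref{eq:1.3} and exploit the factored form of $f(y)^2$ implicit in \eqref{eq:1.2}, namely $f(y)^2 = 2y^n + 2(1-y)^n + 2$. First I would locate the zeros of $g_s$ in $\F$ directly. If $x \in \F$ is a zero, then from \eqref{eq:1.3} we need $x^n + (1-x)^n = s$; writing $x^n = \rho(x) x \cdot x^{(q-1)/2}\cdot$ (adjusting for $n = (q+1)/2$), one has $x^n = \rho(x)\,x$ for $x \ne 0$ and similarly $(1-x)^n = \rho(1-x)(1-x)$. So the equation becomes $\rho(x)x + \rho(1-x)(1-x) = s$, and a short case analysis on the four sign possibilities for $(\rho(x),\rho(1-x))$ shows: the case $(+,+)$ forces $x + (1-x) = 1 = s$, excluded; the case $(-,-)$ forces $-1 = s$, excluded; the case $(+,-)$ gives $x - (1-x) = 2x-1 = s$, i.e. $x = (1+s)/2$, which is consistent precisely when $\rho((1+s)/2) = 1$ and $\rho((1-s)/2) = \rho(1 - (1+s)/2) = -1$; and the symmetric case $(-,+)$ gives $x = (1-s)/2$ under the same condition \eqref{eq:2.5} (note $s \ne 0$ guarantees these two zeros are distinct, and $s\ne\pm1$ keeps them away from $0,1$). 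This proves the first assertion and identifies the only $\F$-zeros.

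Next I would pin down the full factorization. Since $g_s$ has degree $2[n/2] = q-1$ (as $n=(q+1)/2$ is odd when $q\equiv1\bmod4$, even otherwise — I'd track this via $\tau$) and we know it has the two simple linear factors $(y-(1+s)/2)$ and $(y-(1-s)/2)$ (simple because the zeros of $g_s$ are distinct for $s\ne\pm1$ by the remark after \eqref{eq:1.4}), it remains to identify the remaining degree-$(q-3)$ part as the displayed product of quadratics over $a \in \C$. The natural route is to verify the claimed factorization \eqref{eq:2.6} as a polynomial identity. I would substitute $y = u(1-u)$-type parametrization coming from $f$: more concretely, the zeros of $g_s$ other than the two rational ones should come in conjugate pairs, and for each $a \in \C$ the quadratic $y^2 + (2as-1-s)y + (2a-1-s)^2/4$ has discriminant that I'd compute to be $(2as-1-s)^2 - (2a-1-s)^2 = (s-1)\cdot(\text{linear in }a)\cdot(\text{something})$, which for $a \in \C$ (so $\rho(a)=\rho(1-a)=1$) should be a non-square when \eqref{eq:2.5} holds — giving irreducibility of each quadratic. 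Then a degree count plus leading-coefficient check ($\tau^2/2$ times monic product of degree $2|\C| + 2 = q-1$, using $|\C| = (q-3)/4$ or the appropriate value) closes the argument.

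Alternatively, and probably more cleanly, I would relate $g_s(y)$ to $g_1(y)$ via \eqref{eq:2.1}: we have $2g_s(y) = f(y)^2 - 2s-2 = (f(y)^2 - 4) - (2s-2) = 2g_1(y) - (2s - 2)$, and more usefully $f(y)^2 - 2s - 2 = (f(y) - \sqrt{2s+2})(f(y)+\sqrt{2s+2})$ when $2s+2$ is a square. Using the known complete factorization $f(y) = \tau \sqrt{y}\sqrt{1-y}\prod \cdots$ type structure from \cite[Lemma 2.3]{EV} (as used in \eqref{eq:2.1}), one substitutes and regroups the linear factors of $f(y)\mp\sqrt{2s+2}$ into the rational linear factors and the conjugate-pair quadratics indexed by $\C$. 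The main obstacle I anticipate is bookkeeping: correctly handling the two parities of $q \bmod 4$ simultaneously (which is exactly what the $\tau$ in \eqref{eq:2.2} absorbs), making sure the quadratics indexed by $a\in\C$ are genuinely irreducible rather than splitting — this is where condition \eqref{eq:2.5} must be used in an essential way, via a quadratic-residue computation of the discriminant — and verifying that no two of these quadratics coincide or share a root with the linear factors, so that \eqref{eq:2.6} is the \emph{irreducible} factorization and not merely \emph{a} factorization.
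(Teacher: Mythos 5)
Your first paragraph (locating the $\F$-rational zeros via $x^n=\rho(x)x$ and the four-way sign analysis) is correct and is essentially the paper's own argument for the equivalence of a rational zero with \eqref{eq:2.5}. Your discriminant computation is also a sound idea: the discriminant of $y^2+(2as-1-s)y+(2a-1-s)^2/4$ is $(2as-1-s)^2-(2a-1-s)^2=4a(a-1)(s^2-1)$, and since $\rho(a(1-a))=1$ for $a\in\C$ while $\rho(1-s^2)=-1$ under \eqref{eq:2.5}, this is a nonsquare; that gives a legitimate alternative to the paper's irreducibility argument (the paper instead notes that a rational root of a quadratic factor would force a double root of $g_s$, contradicting distinctness of the zeros).

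The genuine gap is that you never establish the factorization \eqref{eq:2.6} itself, i.e.\ that each quadratic indexed by $a\in\C$ actually \emph{divides} $g_s(y)$. A degree count plus a leading-coefficient check plus irreducibility of each quadratic does not close this: those facts are consistent with the product being a completely different polynomial of the same degree and leading coefficient. What is needed is a proof that $a+x-2ax\pm 2\sqrt{a-a^2}\sqrt{x-x^2}$ (with $x=(1+s)/2$) is a zero of $g_s$ for every $a\in\C$ --- a computation of the type carried out in \eqref{eq:2.13}--\eqref{eq:2.15} for Theorem 2.3. The paper gets this for free by substituting $x=(1+s)/2$ into the factorizations of $f(y)-f(x)$ and $f(y)+f(x)$ from \cite[(1.3),(1.4)]{EV} and multiplying; your second route ($f(y)^2-(2s+2)$ as a difference of squares, which is legitimate since $\rho(2s+2)=1$ by \eqref{eq:2.5}) is headed in exactly this direction, but you cite the factorization of $f(y)^2-4$ (the $s=1$ case underlying \eqref{eq:2.1}) rather than the factorizations of $f(y)\pm f(x)$ that are actually required, and the regrouping into the quadratics of \eqref{eq:2.6} is not carried out. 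A secondary but load-bearing slip: $\deg g_s=2[n/2]=(q-\rho(-1))/2$, not $q-1$, so the degree bookkeeping you lean on ($|\C|=(q-3)/4$, remaining part of degree $q-3$) is off by roughly a factor of $2$ and would need to be redone with $|\C|=(q-\rho(-1))/4-1$.
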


\begin{proof}
Observe that $g_s(y)$ has a zero $x \in \F$ if and only if
\begin{equation}\label{eq:2.7}
0 = g_s(x) = x\rho(x) +(1-x)\rho(1-x) - s,   \quad x \in \F,
\end{equation}
and since $s \notin \{\pm 1\}$,
\eqref{eq:2.7} is possible if and only if both $\rho(x) = -\rho(1-x)$
and $x \in \{(1 \pm s)/2\}$.
The equivalence of \eqref{eq:2.7} and
\eqref{eq:2.5} easily follows.

Suppose now that \eqref{eq:2.5} holds, so that $(1+s)/2$ and 
$(1-s)/2$ are the only zeros of $g_s(y)$ in $\F$.
Replacing $x$ by $(1+s)/2$ in the 
two equations \cite[(1.3),(1.4)]{EV} and then multiplying
these equations together, we obtain the factorization \eqref{eq:2.6}.

If one of the quadratic factors in  \eqref{eq:2.6} were reducible,
it would have a zero in $\F$.  Then, since $(1+s)/2$ and
$(1-s)/2$ are the only zeros of $g_s(y)$ in $\F$,
$g_s(y)$ would have a double zero in $\F$, contradicting the fact
that the zeros of $g_s(y)$ are all distinct when $s \ne \pm 1$.
This proves that all the factors in  \eqref{eq:2.6} are irreducible.
\end{proof}


The next two theorems 
deal with the cases where every irreducible factor $I(y)$ of $g_s(y)$ 
is quadratic. (This is in contrast to Theorem 2.1, where both
linear and quadratic factors $I(y)$ appeared.)
We first prove a lemma.

\begin{lem}\label{Lemma 2.2}
The set $ W=\{w \in \F: \rho((1+w)/2) = \rho((1-w)/2) = -1 \}$ 
has cardinality
$|W|=(q - \rho(-1))/4$.
\end{lem}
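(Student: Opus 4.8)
The plan is to count $|W|$ by a standard character-sum argument, exploiting the fact that $\rho$ is the quadratic character on $\F$. Write $N(\e_1,\e_2)$ for the number of $w \in \F$ with $\rho((1+w)/2)=\e_1$ and $\rho((1-w)/2)=\e_2$, where $\e_1,\e_2 \in \{\pm 1\}$; then $W$ corresponds to $N(-1,-1)$. The indicator of the event $\rho(u)=\e$ for $u \ne 0$ is $(1+\e\rho(u))/2$, so
\begin{equation}\label{eq:lem22a}
N(-1,-1)=\sum_{\substack{w \in \F \\ w \ne \pm 1}} \frac{1-\rho((1+w)/2)}{2}\cdot\frac{1-\rho((1-w)/2)}{2}.
\end{equation}
Expanding the product gives four sums. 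The first contributes $(q-2)/4$. The two "linear" sums $\sum_w \rho((1\pm w)/2)$, taken over all $w \in \F$, vanish (a nonconstant linear substitution permutes $\F$, and $\sum_{x \in \F}\rho(x)=0$); removing the two excluded points $w=\pm1$ alters each by at most the value of $\rho$ at those points, which I will track explicitly. The crux is the remaining cross term
\begin{equation}\label{eq:lem22b}
\frac14\sum_{w \in \F}\rho\!\left(\frac{1+w}{2}\right)\rho\!\left(\frac{1-w}{2}\right)=\frac14\sum_{w \in \F}\rho\!\left(\frac{1-w^2}{4}\right)=\frac{\rho(1/4)}{4}\sum_{w \in \F}\rho(1-w^2).
\end{equation}

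The main step is therefore the evaluation of $\sum_{w \in \F}\rho(1-w^2)$. This is a classical Jacobi-type sum: writing $\rho(1-w^2)=\rho((1-w)(1+w))$ and substituting, or directly using the standard identity $\sum_{w\in\F}\rho(aw^2+bw+c)=-\rho(a)$ when the discriminant $b^2-4ac$ is a nonzero square (here $a=-1$, $b=0$, $c=1$, discriminant $4 \ne 0$ a square), one gets $\sum_{w\in\F}\rho(1-w^2)=-\rho(-1)$. Since $\rho(1/4)=\rho(4)^{-1}=1$, the cross term in \eqref{eq:lem22b} equals $-\rho(-1)/4$.

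Assembling the pieces: the four contributions are $(q-2)/4$, two linear sums each of which I evaluate (each equals a bounded correction coming only from excluding $w=\pm1$, where one of $(1\pm w)/2$ vanishes so that term was never present, making the relevant full sum over $\F$ already the right object up to the single point where the other factor is nonzero), and the cross term $-\rho(-1)/4$. Careful bookkeeping of the excluded points $w=\pm1$ — at $w=1$ we have $(1-w)/2=0$, and at $w=-1$ we have $(1+w)/2=0$, so neither point was ever eligible for $W$ and each can simply be dropped from the full-field sums with an explicit correction of the form $\rho(\text{something})/2$ that cancels against the linear terms — yields $|W|=N(-1,-1)=(q-\rho(-1))/4$. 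The one place demanding genuine care, rather than routine manipulation, is the consistent handling of these two boundary points across all four sums; I expect that to be the only real obstacle, since the character-sum evaluations themselves are standard.
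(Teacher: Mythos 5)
Your proof is correct and is essentially the paper's own argument: both write $|W|$ as $\tfrac14\sum_w\bigl(1-\rho(\tfrac{1+w}{2})\bigr)\bigl(1-\rho(\tfrac{1-w}{2})\bigr)$, expand, and reduce everything to the classical evaluation $\sum_{w\in\F}\rho(1-w^2)=-\rho(-1)$ (the paper simply cites \cite[Theorem 2.1.2]{BEW} for the whole sum). The only difference is that you exclude $w=\pm1$ and then track boundary corrections; the paper sums over all of $\F$, where those two terms vanish automatically because the factor $1-\rho(1)=0$, so no bookkeeping is needed.
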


\begin{proof}
We have
\begin{equation}\label{eq:2.8}
|W|=\frac{1}{4} \sum\limits_{w \in \F}
\Big( 1 -\rho\Big( \frac{1+w}{2} \Big) \Big)
\Big( 1 -\rho\Big( \frac{1-w}{2} \Big) \Big)
=\frac{q - \rho(-1)}{4},
\end{equation}
where the last equality follows from \cite[Theorem 2.1.2]{BEW}.
\end{proof}

\begin{thm}\label{Theorem 2.3}
Suppose that  $s\notin \{ 0, \pm 1 \}$. Then
\begin{equation}\label{eq:2.9}
\rho((1+s)/2)=-1, \quad \rho((1-s)/2)= 1
\end{equation}
if and only if $g_s(y)$ has no zeros in $\F$ but has a zero $x$ of
degree 2 over $\F$.
When \eqref{eq:2.9} holds, $g_s(y)$ has the irreducible factorization
\begin{equation}\label{eq:2.10}
g_s(y) = \frac{\tau^2}{2} \prod\limits_{w} 
\Big(y^2-(1+sw)y+(s+w)^2/4\Big),
\end{equation}
where the product is over all $w \in \F$ for which
\begin{equation}\label{eq:2.11}
\rho((1+w)/2) = \rho((1-w)/2)) = -1.
\end{equation}
\end{thm}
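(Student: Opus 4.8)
The plan is to run the argument in close parallel with the proof of Theorem 2.1, using the two basic identities \cite[(1.3),(1.4)]{EV} together with the substitution $x\mapsto$ (a zero of $g_s$ of degree $2$) and then exploit a dimension/counting argument to pin down exactly which quadratics occur. First I would settle the equivalence: a zero $x$ of $g_s$ satisfies \eqref{eq:2.7}, so over an extension the zeros split according to whether $\rho(x)=\rho(1-x)$ or $\rho(x)=-\rho(1-x)$; the latter forces $x\in\{(1\pm s)/2\}\subset\F$, and since $s\notin\{0,\pm1\}$ these two elements are distinct and lie in $\F$ but are roots of $g_s$ only when \eqref{eq:2.5} holds. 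Hence \eqref{eq:2.9} — the exact negation of \eqref{eq:2.5} among the ``$\rho(x)=-\rho(1-x)$'' possibilities — is equivalent to $g_s$ having no $\F$-rational zero; that there is then a zero of degree exactly $2$ (rather than higher) will follow once the factorization \eqref{eq:2.10} is in hand, or alternatively can be read off from \eqref{eq:1.4} and the distinctness of zeros when $s\ne\pm1$.

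Next I would produce the quadratic factors. For each $w\in W$ (notation of Lemma 2.2), the element $x=(1+w)/2$ satisfies $\rho(x)=\rho(1-x)=-1$, and substituting this $x$ into \cite[(1.3),(1.4)]{EV} and multiplying the two resulting equations — exactly as in the proof of Theorem 2.1 but now with the roles of residue/non-residue reversed — should yield the quadratic $y^2-(1+sw)y+(s+w)^2/4$ as a factor of $g_s(y)$ (up to the constant $\tau^2/2$). I would check that distinct $w\in W$ give distinct, hence coprime, quadratics (the coefficient $-(1+sw)$ is injective in $w$ since $s\ne0$), so their product divides $g_s$. Then a degree count closes the argument: $\deg g_s=2[n/2]=q-1$ when $q\equiv1\bmod4$ and $=q-3$ when $q\equiv3\bmod4$ by \eqref{eq:1.3} and \eqref{eq:2.2}, while $2|W|=(q-\rho(-1))/2$ by Lemma 2.2, and one verifies $\deg g_s = 2\cdot 2|W|$ in each residue class of $q$ mod $4$ — so the product of the $2|W|$ quadratics, each of degree $2$, already accounts for the full degree, giving \eqref{eq:2.10} as an equality of polynomials once leading coefficients are matched via \eqref{eq:1.3}.

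Finally I would prove irreducibility of each quadratic factor. If some factor $y^2-(1+sw)y+(s+w)^2/4$ were reducible over $\F$, it would have a zero in $\F$; but we have just shown under \eqref{eq:2.9} that $g_s$ has no zero in $\F$, a contradiction. This simultaneously shows every irreducible factor is quadratic and that $g_s$ has a zero of degree $2$, completing the stated equivalence.

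The main obstacle will be the explicit computation that substituting $x=(1+w)/2$ into \cite[(1.3),(1.4)]{EV} and multiplying produces precisely the quadratic $y^2-(1+sw)y+(s+w)^2/4$ with the right normalizing constant $\tau^2/2$: one must keep careful track of $\tau$ and of the sign changes $\rho(x)=\rho(1-x)=-1$ (versus the mixed-sign situation in Theorem 2.1), and confirm the constant-coefficient $(s+w)^2/4$ — where I expect the identity $x(1-x)=(1-w^2)/4$ and $s = x\rho(x)+(1-x)\rho(1-x) = -w$ \emph{evaluated at the relevant parameter} to be the key simplification. The degree bookkeeping in the two cases $q\equiv\pm1\bmod4$ is routine but must be done to be sure no extra irreducible factors are hiding.
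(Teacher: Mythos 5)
There are two genuine gaps. First, the backward direction of the equivalence is not established. You assert that \eqref{eq:2.9} is equivalent to $g_s$ having no zero in $\F$, but that is false: having no zero in $\F$ is equivalent to the negation of \eqref{eq:2.5}, and the negation also includes the two cases $\rho((1+s)/2)=\rho((1-s)/2)=\pm 1$ (equivalently $\rho(1-s^2)=1$), in which $g_s$ likewise has no rational zero --- its zeros then have degree $>2$ (this is the content of Section 4). So from ``no zeros in $\F$ and a zero of degree $2$'' you must still rule out those two cases, and nothing in your sketch does this. The paper handles it by writing the degree-$2$ zero as $u+\sqrt v$ with $\rho(v)=-1$, solving for $u$ to extract the parameter $w=(2u-1)/s\in\F$ satisfying $(1-w^2)(1-s^2)=4v$, and deducing $\rho(1-s^2)=-1$ from a norm computation; only then does Theorem 2.1 convert ``$\rho(1-s^2)=-1$ and no rational zeros'' into \eqref{eq:2.9}.

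Second, your mechanism for producing the quadratic factors does not work as described. The element $(1+w)/2$ with $w\in W$ satisfies $\rho((1+w)/2)=\rho((1-w)/2)=-1$ and is therefore a zero of $g_{-1}$, not of $g_s$; substituting it into \cite[(1.3),(1.4)]{EV} and multiplying yields a factorization of $f(y)^2-f((1+w)/2)^2=2g_{-1}(y)$ (essentially \eqref{eq:2.4}), which carries no information about $g_s$ for the given $s$. The quadratic $y^2-(1+sw)y+(s+w)^2/4$ genuinely couples $s$ and $w$, and the paper obtains it by exhibiting the explicit degree-$2$ zero $x=\tfrac12+\tfrac{sw}{2}+\tfrac12\sqrt{1-w^2}\sqrt{1-s^2}$, verifying $g_s(x)=0$ via the factorizations \eqref{eq:2.13}--\eqref{eq:2.14} and the norm identity $(A+\sqrt{B})^{q+1}=A^2-B$ for $\sqrt{B}\notin\F$, and then forming $(y-x)(y-x^q)$. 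Your concluding degree count is the right idea (and is exactly how the paper finishes, via Lemma 2.2), but the arithmetic is off by a factor of $2$: $\deg g_s=2[n/2]=(q-\rho(-1))/2$, and there are $|W|=(q-\rho(-1))/4$ quadratic factors, not $2|W|$. Your irreducibility argument (no rational zeros forces each quadratic to be irreducible) is fine.
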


\begin{proof}
Suppose that \eqref{eq:2.9} holds.   Then by Theorem 2.1, $g_s(y)$
has no zeros in $\F$.  We proceed to show that 
\begin{equation}\label{eq:2.12}
x:= \frac{1}{2} + \frac{sw}{2} +\frac{1}{2} \sqrt{1-w^2}\sqrt{1-s^2}
\end{equation}
is a zero of $g_s(y)$ for any $w \in \F$ satisfying \eqref{eq:2.11}.
We have the factorizations
\begin{equation}\label{eq:2.13}
x= \Big(\sqrt{(1+w)/2}\sqrt{(1+s)/2}
+\sqrt{(1-w)/2}\sqrt{(1-s)/2}\Big)^2
\end{equation}
and
\begin{equation}\label{eq:2.14}
1-x= 
\Big(\sqrt{(1+w)/2}\sqrt{(1-s)/2}
-\sqrt{(1-w)/2}\sqrt{(1+s)/2}\Big)^2,
\end{equation}
for appropriate choices of the square roots.
These two factorizations yield
\begin{equation}\label{eq:2.15}
\begin{split}
x^n+(1-x)^n &=\Big(\sqrt{(1+w)/2}\sqrt{(1+s)/2}
+\sqrt{(1-w)/2}\sqrt{(1-s)/2}\Big)^{q+1}\\
&+\Big(\sqrt{(1+w)/2}\sqrt{(1-s)/2}
-\sqrt{(1-w)/2}\sqrt{(1+s)/2}\Big)^{q+1}.
\end{split}
\end{equation}
Whenever $A, B \in \F$ and $\sqrt{B} \notin \F$, we have 
\[
(A+\sqrt{B})^{q+1} = (A+\sqrt{B})(A+\sqrt{B})^q =A^2 - B.  
\]
Thus 
when \eqref{eq:2.11} holds, the right member
of \eqref{eq:2.15} equals $s$.   This completes the proof
that $x$ is a zero of $g_s(y)$ whenever $w$ satisfies \eqref{eq:2.11}.

Conversely, suppose that
$g_s(y)$ has a zero of
degree 2 over $\F$ but has no zeros in $\F$.
We wish to prove \eqref{eq:2.9}.
Denote the zero of degree 2 by
\begin{equation}\label{eq:2.16}
x:=u + \sqrt{v}, \quad u,v \in \F, \quad \rho(v)=-1.
\end{equation}
Since $g_s(x)=0$,
we have $(s-x^n)^2 =(1-x)^{q+1}$, so that
\[
s^2 + u^2 -v-2sx^n = (1-u)^2 - v.
\]
Thus $2sx^n=s^2+2u-1$, and squaring gives
$4s^2(u^2-v) = (s^2 +2u -1)^2$.
Solving for $u$, we have
\begin{equation}\label{eq:2.17}
u = \frac{1}{2} + \frac{sw}{2}, \quad w:=\sqrt{1+4v/(s^2-1)}.
\end{equation}
(This definition of $w$
is consistent with \eqref{eq:2.11}, as will be shown shortly.)
Observe that $w=(2u-1)/s \in \F$.
Since $\rho(v) = -1$ and
\begin{equation}\label{eq:2.18}
(1-w^2)(1-s^2) = 4v,
\end{equation}
it follows that
\begin{equation}\label{eq:2.19}
\rho(1-w^2) = -\rho(1-s^2).
\end{equation}
In particular,
\begin{equation}\label{eq:2.20}
s^2 \ne w^2, \quad w^2 \ne 1.
\end{equation}
By \eqref{eq:2.16}--\eqref{eq:2.18},
\begin{equation}\label{eq:2.21}
x= \frac{1}{2} + \frac{sw}{2} +\frac{1}{2} \sqrt{1-w^2}\sqrt{1-s^2}.
\end{equation}
Thus by \eqref{eq:2.15},
\begin{equation}\label{eq:2.22}
\begin{split}
s&=\Big(\sqrt{(1+w)/2}\sqrt{(1+s)/2}
+\sqrt{(1-w)/2}\sqrt{(1-s)/2}\Big)^{q+1}\\
&+\Big(\sqrt{(1+w)/2}\sqrt{(1-s)/2}
-\sqrt{(1-w)/2}\sqrt{(1+s)/2}\Big)^{q+1}.
\end{split}
\end{equation}
Suppose for the purpose of contradiction that
$\rho((1+w)/2))$ $=$ $-\rho((1-w)/2)$,
so that by \eqref{eq:2.19},
$\rho((1+s)/2) = \rho((1-s)/2)$.
If 
$\rho((1+s)/2) = \rho((1+w)/2)$,
then 
\[
\sqrt{(1+w)/2}\sqrt{(1+s)/2} \in \F, \quad
\sqrt{(1+w)/2}\sqrt{(1-s)/2} \in \F
\]
and
\[
\sqrt{(1-w)/2}\sqrt{(1+s)/2} \notin \F, \quad
\sqrt{(1-w)/2}\sqrt{(1-s)/2} \notin \F,
\]
so by \eqref{eq:2.22},
$s=w$, which contradicts \eqref{eq:2.20}.
Similarly, if
$\rho((1+s)/2) = \rho((1-w)/2)$,
we obtain the contradiction $s = -w$.
This contradiction shows that
\begin{equation}\label{eq:2.23}
\rho((1+w)/2) = \rho((1-w)/2).
\end{equation}
Then by \eqref{eq:2.19},
\begin{equation}\label{eq:2.24}
\rho(1-s^2) = -1.
\end{equation}
By Theorem 2.1,  \eqref{eq:2.5} cannot hold, so \eqref{eq:2.24} yields
\eqref{eq:2.9}, as desired.

Next we show that $w$ satisfies \eqref{eq:2.11}.
Suppose for the purpose of contradiction that the two members of
\eqref{eq:2.23} are equal to $1$. Then by \eqref{eq:2.9},
\[
\sqrt{(1+w)/2}\sqrt{(1+s)/2} \notin \F, \quad
\sqrt{(1-w)/2}\sqrt{(1+s)/2} \notin \F,
\]
and
\[
\sqrt{(1-w)/2}\sqrt{(1-s)/2} \in \F, \quad
\sqrt{(1+w)/2}\sqrt{(1-s)/2} \in \F.
\]
Then  \eqref{eq:2.22} yields the
contradiction $s = -s$.  This contradiction shows that
$w$ satisfies \eqref{eq:2.11}.

Assuming now \eqref{eq:2.9}, we have only to prove the irreducible
factorization in \eqref{eq:2.10}.   In view of \eqref{eq:2.12},
\[
(y-x)(y-x^q) = y^2 - (1+sw)y +(s+w)^2/4
\]
is an irreducible factor of $g_s(y)$ over $\F$,
for each $w \in \F$ satisfying \eqref{eq:2.11}.
Since the degree of $g_s(y)$ is equal to $(q - \rho(-1))/2$, it
remains to show that there are $(q - \rho(-1))/4$ choices of $w \in \F$
for which \eqref{eq:2.11} holds.   This follows from Lemma 2.2.
\end{proof}

\begin{thm}\label{Theorem 2.4}
Let $s=0$. Then
$g_s(y)$ has the irreducible factorization
\begin{equation}\label{eq:2.25}
g_s(y) = \frac{\tau^2}{2} \prod\limits_{v} (y^2-y+v/4),
\end{equation}
where the product is over all $v \in \F$ for which
\begin{equation}\label{eq:2.26}
\rho(v) = \rho(1-v) = -1.
\end{equation}
\end{thm}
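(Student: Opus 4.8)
The plan is to imitate the proof of Theorem 2.3, exhibiting one monic irreducible quadratic factor of $g_0(y)$ for every $v\in\F$ satisfying \eqref{eq:2.26}, and then to conclude by matching degrees and leading coefficients. Theorem 2.3 cannot be quoted directly, since it assumes $s\notin\{0,\pm1\}$; but with $s=0$ the relevant computation is in fact shorter, because the analogue of \eqref{eq:2.12} collapses to $x=\tfrac12+\tfrac12\sqrt{1-v}$.

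First I would fix $v\in\F$ with $\rho(v)=\rho(1-v)=-1$ and set $x:=\tfrac12+\tfrac12\sqrt{1-v}\in\FF$ for a fixed choice of square root. Since $\rho(1-v)=-1$ we have $\sqrt{1-v}\notin\F$, hence $x\notin\F$ and $x^q=\tfrac12-\tfrac12\sqrt{1-v}=1-x$; therefore $x^{q+1}=x(1-x)=\bigl(1-(1-v)\bigr)/4=v/4$, so that $(x^n)^2=x^{q+1}=v/4$. Because $\rho(v/4)=-1$, this forces $x^n\notin\F$, whence $(x^n)^q=-x^n$ and
\[
g_0(x)=x^n+(1-x)^n=x^n+(x^q)^n=x^n+(x^n)^q=0 .
\]
Thus the minimal polynomial of $x$ over $\F$, namely $(y-x)(y-x^q)=y^2-y+x(1-x)=y^2-y+v/4$, is an irreducible quadratic factor of $g_0(y)$; its irreducibility is also visible from the fact that its discriminant $1-v$ is a nonsquare.

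Distinct admissible $v$ give distinct, hence pairwise coprime, monic irreducible quadratics $y^2-y+v/4$, so their product over all $v$ satisfying \eqref{eq:2.26} divides $g_0(y)$. The affine substitution $v=(1+w)/2$ (which sends $1-v$ to $(1-w)/2$) puts this index set in bijection with the set $W$ of Lemma 2.2, so there are exactly $(q-\rho(-1))/4$ such $v$; equivalently this count can be read off from \eqref{eq:2.4}. Hence the product has degree $(q-\rho(-1))/2$, and a short check in the two cases $q\equiv 1$ and $q\equiv 3\pmod 4$ shows that this equals $2[n/2]=\deg g_0(y)$. Therefore $g_0(y)$ is that product times a constant, and the constant must be the leading coefficient of $g_0(y)$, which equals $\tau^2/2$ by \eqref{eq:1.3}; this is exactly \eqref{eq:2.25}. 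The only step requiring genuine care is the Frobenius bookkeeping in the construction — correctly using $\rho(1-v)=-1$ to place $x$ outside $\F$ and $\rho(v)=-1$ to place $x^n$ outside $\F$ — together with the parity check of the identity $2[n/2]=(q-\rho(-1))/2$; everything else is routine.
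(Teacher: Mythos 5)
Your proposal is correct and follows essentially the same route as the paper: exhibit the zero $x=(1+\sqrt{1-v})/2$ for each admissible $v$, obtain the irreducible quadratic $y^2-y+v/4$, count the admissible $v$ via Lemma 2.2 (which the paper offers as an alternative to comparing degrees in \eqref{eq:2.4}), and match degrees and leading coefficients. Your verification that $g_0(x)=0$ via $x^q=1-x$ and $(x^n)^q=-x^n$ is just a Frobenius rephrasing of the paper's direct computation $v^{(q+1)/2}+v=v(\rho(v)+1)=0$.
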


\begin{proof}
Suppose that \eqref{eq:2.26} holds and
\[
x:=(1+\sqrt{1-v})/2, \quad 1-x=(1-\sqrt{1-v})/2.
\]
Then
\begin{equation*}
\begin{split}
(1-&\sqrt{1-v})^n \   2^n (x^n + (1-x)^n) \\
&=(1-\sqrt{1-v})^n ( (1+\sqrt{1-v})^n  + (1-\sqrt{1-v})^n) \\
&= v^n + (1-\sqrt{1-v})^{q+1}
= v^{(q+1)/2}  + v =v (\rho(v) + 1) = 0.
\end{split}
\end{equation*}
Therefore $x$ is a zero of $g_0(y)$, so that
\[
(y-x)(y-x^q) = y^2-y+v/4
\]
is an irreducible factor of $g_0(y)$ over $\F$.
Since the degree of $g_0(y)$ is equal to $(q - \rho(-1))/2$, it
remains to show that there are $(q - \rho(-1))/4$ choices of $v \in \F$
for which \eqref{eq:2.26} holds.   This follows by comparing degrees
on both sides of \eqref{eq:2.4}.  (Alternatively, it can be deduced from
Lemma 2.2.)
\end{proof}

When $s=0$, Theorem 2.4 shows that the irreducible factors
are all quadratic.  When $s \in \{ \pm 1 \}$,
\eqref{eq:2.1} and \eqref{eq:2.4} show that the irreducible factors
are all linear.  When $s \notin \{0, \pm 1 \}$,
Theorems 2.1 and 2.3 show that                  
the irreducible factors of $g_s(y)$
all have degree $\le 2$ if and only if $\rho(1-s^2) = -1$.
In each case above, the irreducible factorization is completely determined.
In Section 4, we consider those remaining $s$ for which
\begin{equation}\label{eq:2.27}
\rho(1 - s^2) = 1, \quad s \ne 0.
\end{equation}
These are precisely the values of $s$ for which $g_s(y)$ has an
irreducible factor of degree $>2$.

\section{A second order linear recurrence sequence}

Define  $c=1-s^2$.  From here on, we will always assume that
\begin{equation}\label{eq:3.1}
\rho(c) = \rho(1-c) = 1, \ \mbox{   i.e.,   } \quad 1-s^2=c \in \C.
\end{equation}
This is just a restatement of \eqref{eq:2.27}.

For $-\infty < k < \infty$, define a bilateral 
second order linear recurrence sequence 
$S(c):=\langle c_k \rangle$  in $\F$ by
\begin{equation}\label{eq:3.2}
c_{k+1} = (2-4c)c_k -c_{k-1} +2c, \quad c_0=0, \ c_1=c.
\end{equation}
For example, 
$c_2 = -4c^2 +4c$, $\ c_3 = 16c^3 -24c^2 +9c$,
and for a general positive integer $k$, $c_k=c_{-k}$ 
equals a polynomial in $c$
over the integers with leading term $(-4)^{k-1} c^k$.
If each of the three $c_i$'s in \eqref{eq:3.2} is replaced by $c_i +1/2$,
then the inhomogeneous sequence in \eqref{eq:3.2} is replaced
by a homogeneous one.
The characteristic polynomial corresponding to the homogeneous sequence is
\begin{equation}\label{eq:3.3}
y^2 + (4c-2)y + 1.
\end{equation}
Let $i \in \FF$ denote a fixed square root of $-1$.
The zeros of the polynomial in \eqref{eq:3.3} are $\beta^2$ and $\beta^{-2}$,
where
\begin{equation}\label{eq:3.4}
\beta: = \sqrt{1-c} + i \sqrt{c}, \quad
\beta^2 = 1-2c + 2i\sqrt{1-c}\sqrt{c},
\end{equation}
so that
\[
\beta^{-1} = \sqrt{1-c} - i \sqrt{c}, \quad
\beta^{-2} = 1-2c - 2i\sqrt{1-c}\sqrt{c}.
\]
By \eqref{eq:3.1}, $\beta \in \F[i]$,
and $\beta \in \F$ if and only if $\rho(-1)=1$.
Using the well known evaluation of homogeneous linear recurrence sequences
\cite[10.2.17]{HN}, we obtain the closed form evaluations
\begin{equation}\label{eq:3.5}
c_k = \frac{-1}{4}(\beta^k - \beta^{-k})^2
\end{equation}
for every integer $k$.
A direct calculation using \eqref{eq:3.5} yields the (nonlinear)
recurrence relation
\begin{equation}\label{eq:3.6}
c_{k+1} c_{k-1} = (c - c_k)^2.
\end{equation}

Note that the right member of \eqref{eq:3.5} does not depend
on which of the ambiguous signs of the square roots
are chosen in the formula for $\beta$ in \eqref{eq:3.4}.
Fix one of the choices of $\beta$. We may assume that $\beta$
has even order, since otherwise we could replace $\beta$ by $-\beta$.

By  \cite[10.2.4]{HN}, the sequence $S(c)$ is purely periodic.
Let $e$ denote its period.
Thus for each integer $k$,
\begin{equation}\label{eq:3.7}
c_{e+k}=c_k = c_{-k} = c_{e-k}.
\end{equation}
Write $\theta=\ord (\beta^2)$ (the order of $\beta^2$).
Thus $\ord (\beta) = 2\theta$.
By \eqref{eq:3.5}, $c_k = c_{k + \theta}$ for all $k$,
so that $e$ divides $ \theta$.
Also by \eqref{eq:3.5}, 
$c_k = 0$ if and only if $\theta$ divides $k$.
Since $c_e = c_0 = 0$ by \eqref{eq:3.7},  
$\theta$ divides $e$, so $\theta = e$. In summary,
\begin{equation}\label{eq:3.8}
\ord (\beta) =2e
\end{equation}
and
\begin{equation}\label{eq:3.9}
c_k = 0 \ \mbox{  if and only if } \ e|k.
\end{equation}

By \eqref{eq:3.5},
\begin{equation}\label{eq:3.10}
1-c_k = \frac{1}{4}(\beta^k + \beta^{-k})^2,
\quad s=\pm \sqrt{1-c_1} = \pm ( \beta + \beta^{-1})/2.
\end{equation}
Whether or not $\beta \in \F$, it follows
from \eqref{eq:3.4}, \eqref{eq:3.5} and \eqref{eq:3.10} that
\begin{equation}\label{eq:3.11}
\rho(c_k) = \rho(1-c_k) = 1, \ \mbox{   i.e.,} \quad c_k \in \C,
\ \ \mbox{whenever} \quad c_k \notin \{0,1\}.
\end{equation}
From \eqref{eq:3.10}, we also see that 
\begin{equation}\label{eq:3.12}
c_k = 1 \ \mbox{ if and only if  }  2|e \ \mbox{ and }  k \equiv e/2 \pmod e.
\end{equation}

For $0 \le j < k \le e/2$, we claim that $c_j \ne c_k$.
To see this, suppose otherwise.  Then by \eqref{eq:3.10},
$\beta^k + \beta^{-k} = \epsilon (\beta^j + \beta^{-j})$
where $\epsilon \in \{\pm 1\}$.
Equivalently,
$\beta^k (1 - \epsilon \beta^{j-k}) 
=\epsilon \beta^{-j} (1 - \epsilon \beta^{j-k})$.
The factors in parentheses are nonzero, so they can be canceled to yield
$\beta^{j+k} = \epsilon$.
This is impossible, since $j+k$ lies strictly between 0 and $e$.
Thus the claim is proved.

If  $\beta \in \F$, then $\beta^{q-1}=1$.
If  $\beta \notin \F$, then $\beta^q = \beta^{-1}$ by \eqref{eq:3.4},
so that $\beta^{q+1}=1$.
Therefore  in all cases, it follows from \eqref{eq:3.8} that
$e$ divides $E$, where $E$ is the even integer defined by
\begin{equation}\label{eq:3.13}
E:=2[n/2]=(q - \rho(-1))/2.
\end{equation}

If $c = -(\zeta -\zeta^{-1})^2/4$ for some 
$\zeta \in \F[i]$ of order $2e$ with $e \mid E$,
then $S(c)$ has period $e$ and \eqref{eq:3.11} holds.
For example, suppose that $B$ is an element of $\F[i]$
of full order $2E$.  (In the case $q \equiv 1 \pmod 4$,
this means that $B$ is a primitive root in $\F$.) 
A special case of the general sequence $S(c)$ of period $e$ is the sequence
$S(C)=\langle C_k \rangle$ of period $E$, where (cf. \eqref{eq:3.5}),
\begin{equation}\label{eq:3.14}
C_k: = \frac{-1}{4}(B^k - B^{-k})^2, \quad C:=C_1 =\frac{-1}{4}(B - B^{-1})^2.
\end{equation}
We have $C_0=0$ and $C_{E/2} = 1$, and by \eqref{eq:3.11},
the set $\{ C_j: 1 \le j \le E/2 -1\}$ is a subset of $\C$ of cardinality
$E/2 - 1$. But $\C$ itself has cardinality $E/2 - 1$, 
which can be seen by comparing the degrees on both sides of \eqref{eq:2.1}.
Therefore,
\begin{equation}\label{eq:3.15}
\{ C_j: 1 \le j \le E/2 -1\} = \C.
\end{equation}

For $u = \pm 1$, define, for each integer $k$,
\begin{equation}\label{eq:3.16}
A_u(k,c_1) = c_k+c_1-2c_kc_1+2u\sqrt{c_k-c_k^2}\sqrt{c_1-c_1^2}
\end{equation}
and
\begin{equation}\label{eq:3.17}
A'_u(k,c_1) = c_k+c_1-2c_kc_1-2u\sqrt{c_k-c_k^2}\sqrt{c_1-c_1^2},
\end{equation}
so that $A'_u = A_{-u}$.
We drop the subscript $u$ when $u=1$.
Define the set
\begin{equation}\label{eq:3.18}
Z(k,c_1): = \{ A(k,c_1), A'(k,c_1)\}.
\end{equation}

By \eqref{eq:3.11}, the functions $A(k,c_1), A'(k, c_1)$
have values in $\F$.  We can make these functions
single-valued for each $k$ by specifying
the signs of the square roots of 
$c_k$, $1-c_k$, and $c_k-c_k^2$ 
in terms of our fixed $\beta$,
as follows:
\begin{equation}\label{eq:3.19}
\sqrt{c_k}: = \frac{-i}{2}(\beta^k - \beta^{-k}), \quad
\sqrt{1-c_k}: = \frac{1}{2}(\beta^k + \beta^{-k}),
\end{equation}
and
\begin{equation}\label{eq:3.20}
\sqrt{c_k-c_k^2}: =\sqrt{c_k}\sqrt{1-c_k}= 
\frac{-i}{4}(\beta^{2k} - \beta^{-2k}).
\end{equation}
Note that the values of these square roots depend not just on their arguments
but on the subscripts  $k$ as well.
For example, $\sqrt{1-c_1} \ne \sqrt{1-c_{e-1}}$ when $e>2$,
even though $c_1 = c_{e-1}$.

\begin{lem}\label{Lemma 3.1}
For each integer $k$,
\begin{equation}\label{eq:3.21}
Z(k,c_1) = \{c_{k-1}, c_{k+1}\}.
\end{equation}
\end{lem}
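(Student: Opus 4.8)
The plan is to prove the set equality $Z(k,c_1) = \{c_{k-1}, c_{k+1}\}$ by computing directly, via the closed forms in \eqref{eq:3.19}--\eqref{eq:3.20}, the two elementary symmetric functions of the pair $\{A(k,c_1), A'(k,c_1)\}$ and the pair $\{c_{k-1}, c_{k+1}\}$, and checking that they agree. Since $A$ and $A'$ differ only by the sign of the square-root term, their sum is $A(k,c_1)+A'(k,c_1) = 2(c_k + c_1 - 2c_kc_1)$, and their product is $A(k,c_1)A'(k,c_1) = (c_k+c_1-2c_kc_1)^2 - 4(c_k - c_k^2)(c_1 - c_1^2)$. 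So it suffices to show
\begin{equation}\label{eq:plan-sum}
c_{k-1} + c_{k+1} = 2(c_k + c_1 - 2c_kc_1)
\end{equation}
and
\begin{equation}\label{eq:plan-prod}
c_{k-1}c_{k+1} = (c_k+c_1-2c_kc_1)^2 - 4(c_k-c_k^2)(c_1-c_1^2).
\end{equation}

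For \eqref{eq:plan-sum}, I would substitute $c_1 = c$ on the right and observe that $2(c_k + c - 2cc_k) = (2-4c)c_k + 2c$, which by the defining recurrence \eqref{eq:3.2} is exactly $c_{k+1} + c_{k-1}$; so \eqref{eq:plan-sum} is just the recurrence rewritten. For \eqref{eq:plan-prod}, the cleanest route is to plug the closed form \eqref{eq:3.5} into both sides. Writing $t = \beta^k$ and using $c_j = -\tfrac14(\beta^j - \beta^{-j})^2$ and $1 - c_j = \tfrac14(\beta^j + \beta^{-j})^2$, the left side $c_{k-1}c_{k+1}$ becomes $\tfrac{1}{16}(\beta^{k-1}-\beta^{-(k-1)})^2(\beta^{k+1}-\beta^{-(k+1)})^2$. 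Note this is already verified in the excerpt as \eqref{eq:3.6}: $c_{k+1}c_{k-1} = (c - c_k)^2$ with $c = c_1$. So \eqref{eq:plan-prod} reduces to the purely algebraic identity
\begin{equation*}
(c_1 - c_k)^2 = (c_k + c_1 - 2c_kc_1)^2 - 4(c_k - c_k^2)(c_1 - c_1^2),
\end{equation*}
which, upon expanding, is a polynomial identity in the two variables $c_k, c_1$ that one checks by direct expansion (both sides equal $c_k^2 + c_1^2 - 2c_kc_1$ after cancellation). Thus $A, A'$ and $c_{k-1}, c_{k+1}$ are the two roots of the same monic quadratic over $\F$, giving $\{A(k,c_1), A'(k,c_1)\} = \{c_{k-1}, c_{k+1}\}$ as sets.

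The one genuine subtlety — and the step I expect to need the most care — is that \eqref{eq:plan-prod} and \eqref{eq:plan-sum} only pin down the \emph{unordered} pair; they do not by themselves tell us that the sign conventions \eqref{eq:3.19}--\eqref{eq:3.20} were consistent, i.e. that $A(k,c_1)$ and $A'(k,c_1)$ really are elements of $\F$ rather than genuinely conjugate elements of $\F[i]$. This is already guaranteed by \eqref{eq:3.11} (each $c_k \notin \{0,1\}$ lies in $\C$, so $c_k - c_k^2$ is a square in $\F$), together with the fact that the specified square roots $\sqrt{c_k - c_k^2}$ in \eqref{eq:3.20} do lie in $\F$: indeed $\tfrac{-i}{4}(\beta^{2k} - \beta^{-2k}) = \sqrt{c_k}\sqrt{1-c_k}$ and one checks from \eqref{eq:3.4} that this quantity is fixed by the Frobenius $\beta \mapsto \beta^{\pm 1}$, hence lies in $\F$. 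With that in hand, both $A(k,c_1)$ and $A'(k,c_1)$ lie in $\F$, they are distinct roots of a quadratic with coefficients in $\F$ (when they are distinct), and the same quadratic has roots $c_{k-1}, c_{k+1}$, completing the identification. (When $c_{k-1} = c_{k+1}$ the quadratic has a double root and the set equality is immediate from \eqref{eq:plan-sum} alone.)
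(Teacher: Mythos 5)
Your proof is correct, and it takes a genuinely different and arguably cleaner route than the paper's. The paper proves Lemma 3.1 by substituting the closed form \eqref{eq:3.5} and the square-root conventions \eqref{eq:3.20} into each of the two elements of $Z(k,c_1)$ separately and grinding out, with a computer algebra system, that they equal $c_{k-1}$ and $c_{k+1}$. You instead compare elementary symmetric functions: the sum identity is literally the defining recurrence \eqref{eq:3.2} rewritten (since $2(c_k+c-2cc_k)=(2-4c)c_k+2c$), and the product identity reduces, via the already-established relation \eqref{eq:3.6}, to the two-variable polynomial identity $(a+b-2ab)^2-4a(1-a)b(1-b)=(a-b)^2$, which is a one-line expansion. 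Since two monic quadratics with equal coefficients have equal root multisets, the set equality follows; this argument needs no computer algebra and no tracking of which sign of the square root was chosen, which is exactly the right level of precision because the lemma only asserts an unordered-pair identity (and indeed Lemma 4.2 only ever uses it up to the sign $\epsilon$). Your closing worry about whether $A(k,c_1)$ and $A'(k,c_1)$ lie in $\F$ is not actually needed for the set equality --- the symmetric-function argument is valid over $\F[i]$ or any extension --- but it is harmless, and your handling of the degenerate case $c_{k-1}=c_{k+1}$ is correct.
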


\begin{proof}
By \eqref{eq:3.16}--\eqref{eq:3.18}, $Z(k, c_1)$
consists of the two elements
\[
c_k+c_1-2c_kc_1 \pm 2\sqrt{c_k-c_k^2}\sqrt{c_1-c_1^2}.
\]
Express each of these two elements as a sum of powers of $\beta$ using
\eqref{eq:3.5} and \eqref{eq:3.20}.   
A longish computation (facilitated by a computer algebra
program) then shows that
these two elements reduce to  $c_{k-1}$ and $c_{k+1}$.
\end{proof}

\section{Irreducible factors of degree exceeding 2}

Recall from \eqref{eq:3.1}  
that
$1-s^2 = c=c_1  \in \C$,
so that $g_s(y)$ has at least one zero whose degree 
(over $\F$) exceeds 2.
It will be shown below \eqref{eq:4.10} that all the zeros have
the same degree.  Theorem 4.4 will show that the common degree
is $e$, where $e>2$ is the period of the sequence $S(c)$.

Given any zero $x$ of $g_s(y)$, the
following lemma gives a formula for $x^q$ in terms of $c_1$ and $x$.
\begin{lem}\label{Lemma 4.1}
Let $x$ be a zero of $g_s(y)$. Then for some $v = \pm 1$,
\[
x^q = c_1 + x -2c_1x + 2v \sqrt{c_1-c_1^2}\ \sqrt{x-x^2}.
\]
\end{lem}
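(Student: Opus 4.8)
The plan is to compute $x^q$ directly from the defining equation $g_s(x)=0$, which reads $x^n+(1-x)^n=s$ with $n=(q+1)/2$. First I would note that $x^n = x\cdot x^{(q-1)/2}$, so $x^n$ is $\pm$ a square root of $x^{q+1}=x\cdot x^q$; more precisely, writing $x^n$ and $(1-x)^n$ as the two quantities appearing in the proof of Theorem 2.3, we have $x^n = \pm\sqrt{x^{q+1}}$ and $(1-x)^n = \pm\sqrt{(1-x)^{q+1}}$ for suitable choices of sign. The relation $x^n+(1-x)^n=s$ then lets me solve for one of these in terms of the other: isolating $x^n = s-(1-x)^n$ and squaring gives $x^{q+1} = s^2 - 2s(1-x)^n + (1-x)^{q+1}$, i.e. $2s(1-x)^n = s^2 + (1-x)^{q+1} - x^{q+1}$.

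The next step is to observe that $x^{q+1}$ and $(1-x)^{q+1}$ are themselves expressible through $x^q$: namely $x^{q+1} = x\cdot x^q$ and $(1-x)^{q+1} = (1-x)(1-x)^q = (1-x)(1-x^q)$, since Frobenius is a ring homomorphism on $\overline{\F}$ fixing $\F$. Substituting these in and using $c_1 = 1-s^2$, the displayed identity $2s(1-x)^n = s^2 + (1-x)(1-x^q) - x x^q$ becomes, after expanding, a linear relation expressing $(1-x)^n$ (and symmetrically $x^n$) as an affine function of $x^q$. I would then repeat the same squaring trick the other way — isolating $(1-x)^n = s - x^n$, squaring to get $(1-x)^{q+1} = s^2 - 2s x^n + x^{q+1}$ — so that I have a second affine relation. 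Together these pin down $x^n$ and $(1-x)^n$ as explicit affine functions of $x^q$, and then the single equation $(x^n)^2 = x^{q+1} = x x^q$ becomes a quadratic equation in $x^q$ whose coefficients involve only $c_1$, $x$, and $x^2$.

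Solving that quadratic by the usual formula will produce $x^q = c_1 + x - 2c_1 x \pm 2\sqrt{\,c_1 - c_1^2\,}\sqrt{\,x - x^2\,}$, which is exactly the claimed form with $v=\pm1$ recording the sign ambiguity of the discriminant. To see that the discriminant is indeed $4(c_1-c_1^2)(x-x^2)$ up to the stated shape, I would check that the constant term and linear-in-$x^q$ term of the quadratic match: the product of the two roots should be $(c_1+x-2c_1x)^2 - 4(c_1-c_1^2)(x-x^2)$ and their sum should be $2(c_1+x-2c_1x)$, and both are routine polynomial identities in $c_1$ and $x$ once the affine relations from the previous paragraph are in hand. (One should note in passing that $c_1 - c_1^2 = c(1-c)$ is a square in $\F$ by \eqref{eq:3.1}, and $x-x^2$ lies in $\F(x)$, so the square roots make sense after the relevant field extension; the sign $v$ is then well-defined once square roots are fixed.)

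\textbf{Main obstacle.} The genuinely delicate point is the first step: extracting the right square-root interpretation of $x^n$ and $(1-x)^n$ and keeping track of which sign combinations are consistent. Since $x$ need not lie in $\F$, the identity $(A+\sqrt B)^{q+1}=A^2-B$ used in Theorem 2.3 does not directly apply, and one must instead argue at the level of $x^q$ itself — essentially that $x^q$ is determined by $x^{q+1}\in\F(x)^{(?)}$ together with the constraint $x^n+(1-x)^n=s$. I expect the cleanest route is to avoid choosing signs prematurely: treat $X:=x^q$ as an unknown, derive the two affine relations above purely formally from $g_s(x)=0$ by the squaring manipulations, eliminate $x^n$ and $(1-x)^n$, and read off the quadratic in $X$; then \emph{any} zero of $g_s(y)$ must have its Frobenius image among the two roots of that quadratic, which is precisely the assertion.
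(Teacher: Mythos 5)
Your proposal is correct and is essentially the paper's own argument in a slightly different packaging: the paper solves linearly for $x^q$ in terms of $x^n$ and then uses the identity $(x^n-sx)^2=(1-s^2)(x-x^2)$ (its equation \eqref{eq:4.2}) to determine $x^n$ up to sign, whereas you eliminate $x^n$ to obtain a quadratic in $X=x^q$ whose discriminant is exactly $4(c_1-c_1^2)(x-x^2)$ — the same computation read in the other order. The only point worth noting is that your elimination divides by $2s$, which is harmless here since \eqref{eq:3.1} forces $s\neq 0$.
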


\begin{proof}
Since
$(1-x)^n = s-x^n$,
squaring yields
\[
(1-x)^{q+1} = x^{q+1} + s^2 - 2sx^n.
\]
Thus 
$1 - x - x^q = s^2 - 2sx^n$,
which is equivalent to
\begin{equation}\label{eq:4.1}
x^q = 1 - s^2 -x +2sx^n.
\end{equation}
Multiplication by $x$ gives
\[
x^{q+1} = (1-s^2)(x-x^2) +2sx^{n+1} - s^2x^2, 
\]
which simplifies to
\begin{equation}\label{eq:4.2}
(x^n - sx)^2 = (1-s^2) (x-x^2).
\end{equation}
Note that this shows that $x-x^2$ is a square in $\F [x]$.
By \eqref{eq:4.1},
\[
x^q = 1-s^2 +x(2s^2-1) +2sx^n - 2s^2x.
\]
Then applying \eqref{eq:4.2}, we obtain
\[
x^q = 1-s^2 + x(2s^2-1) \pm 2s \sqrt{1-s^2}\sqrt{x-x^2},
\]
and since $c_1=1-s^2$, this yields the desired formula
\[
x^q = c_1 + x -2c_1x \pm 2 \sqrt{c_1-c_1^2} \ \sqrt{x-x^2}.
\]
\end{proof}

We proceed to extend the definitions in \eqref{eq:3.16}--\eqref{eq:3.18}.
For $u = \pm 1$, define, for each 
integer $k$ and each zero $x$ of $g_s(y)$,
\begin{equation}\label{eq:4.3}
A_u(k,x) = c_k+x-2c_kx+2u\sqrt{c_k-c_k^2} \ \sqrt{x-x^2}
\end{equation}
and
\begin{equation}\label{eq:4.4}
A'_u(k,x) = c_k+x-2c_kx-2u\sqrt{c_k-c_k^2} \ \sqrt{x-x^2}.
\end{equation}
We drop the subscript $u$ when $u=1$.
Define the set
\begin{equation}\label{eq:4.5}
Z(k,x): = \{ A(k,x), A'(k,x)\}.
\end{equation}
In this notation, Lemma 4.1 states that
\begin{equation}\label{eq:4.6}
x^q \in Z(1,x).
\end{equation}

The values of the functions $A(k,x), A'(k,x)$ lie in $\F[x]$.
We can make these functions
single-valued for each 
zero $x$ of $g_s(y)$, by employing  \eqref{eq:4.2} 
to fix a choice of sign of $\sqrt{x-x^2}$, as follows:
\begin{equation}\label{eq:4.7}
\sqrt{x-x^2} = (x^n - sx)/\sqrt{c_1},
\end{equation}
where $\sqrt{c_1}$ is specified in \eqref{eq:3.19}.
In \eqref{eq:4.7},when the zero $x$ is replaced by the zero $1-x$,
the argument of the square root on the left remains the same,
but the sign of the square root is changed.
Also, when the zero $x$ is replaced by its conjugate zero $x^q$,
we see that
\begin{equation}\label{eq:4.8}
\sqrt{x^q-x^{2q}}=(\sqrt{x-x^2})^q.
\end{equation}

Observe that
\begin{equation}\label{eq:4.9}
\sqrt{A_u(k,x) - A_u(k,x)^2} = 
\pm \Big((2x-1)\sqrt{c_k-c_k^2} +u(2c_k-1)\sqrt{x-x^2} \Big),
\end{equation}
and the same formula holds when each $x$ is replaced by $c_1$.
This is readily verified upon squaring both sides.

Fix a zero $x$ of $g_s(y)$ of smallest degree $d$ over $\F$.
In view of \eqref{eq:1.4}, it follows from \cite[Theorem 3.1]{EV}
that
\begin{equation}\label{eq:4.10}
\{a+x-2ax \pm 2\sqrt{a-a^2}\sqrt{x-x^2}: a \in \{0,1\} \cup \C \}
\end{equation}
is the set of zeros of $g_s(y)$.  Since
$x-x^2$ is a square in $\F [x]$, these zeros all lie in $\F [x]$,
so their degrees cannot exceed $d$.  
Then by minimality of $d$,
every zero $x$ of $g_s(y)$ has degree $d$.
Since at least one zero has degree exceeding 2, we conclude that
$d > 2$.

The next lemma shows how the set $Z(k,x^q)$ depends on  $c_{k-1}$
and $c_{k+1}$.

\begin{lem}\label{Lemma 4.2}
Let $x$ be a zero of $g_s(y)$ and let $k$ be an integer.   Then
for some $\mu, \lambda \in \{\pm 1\}$,
\[
Z(k, x^q) = \{A_\mu (k-1,x), A_\lambda (k+1,x)\}.
\]
\end{lem}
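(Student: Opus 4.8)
The plan is to express everything in terms of powers of $\beta$ and reduce to a calculation that has essentially already been done in Lemma 3.1. First I would apply Lemma 4.1: for some $v = \pm 1$ we have $x^q = c_1 + x - 2c_1 x + 2v\sqrt{c_1-c_1^2}\sqrt{x-x^2}$, i.e. $x^q = A_v(1,x)$ with the sign convention \eqref{eq:4.7}. Next I would substitute this expression for $x^q$ into the definition \eqref{eq:4.3}--\eqref{eq:4.5} of $Z(k,x^q)$. The two elements of $Z(k,x^q)$ are
\[
c_k + x^q - 2c_k x^q \pm 2\sqrt{c_k - c_k^2}\,\sqrt{x^q - x^{2q}}.
\]
By \eqref{eq:4.8}, $\sqrt{x^q-x^{2q}} = (\sqrt{x-x^2})^q$, and applying $q$-th powers to \eqref{eq:4.9} with $x$ in place of $c_1$ (using $c_k^q = c_k$ since $c_k \in \F$) gives $\sqrt{x^q - x^{2q}} = \pm\big((2x-1)\sqrt{c_1-c_1^2} + v(2c_1-1)\sqrt{x-x^2}\big)$ — here the relevant square roots of $c_1-c_1^2$ and $x-x^2$ are the fixed ones, since raising \eqref{eq:4.7} to the $q$-th power is compatible with the choice \eqref{eq:3.19} of $\sqrt{c_1}$ and with $x^q - s x^q{}^{\,?}$... so $\sqrt{x^q-x^{2q}}$ becomes an explicit $\F[x]$-linear combination of $\sqrt{c_1-c_1^2}$ and $\sqrt{x-x^2}$.

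With these substitutions, each element of $Z(k,x^q)$ becomes a polynomial in $c_k$, $c_1$, $x$, $\sqrt{c_1 - c_1^2}$, $\sqrt{x-x^2}$, and the cross term $\sqrt{c_k-c_k^2}\sqrt{c_1-c_1^2}$. Now I would invoke Lemma 3.1 twice in the form \eqref{eq:3.21}: the combinations $c_k + c_1 - 2c_k c_1 \pm 2\sqrt{c_k-c_k^2}\sqrt{c_1-c_1^2}$ are exactly $c_{k-1}$ and $c_{k+1}$. So after collecting terms, the two elements of $Z(k,x^q)$ should organize into the shape $c_{k\mp 1} + x - 2 c_{k\mp 1} x \pm 2\sqrt{c_{k\mp 1}-c_{k\mp 1}^2}\sqrt{x-x^2}$ for appropriate pairings of signs, i.e. into $A_\mu(k-1,x)$ and $A_\lambda(k+1,x)$ for some $\mu,\lambda \in \{\pm 1\}$. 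The sign $v$ from Lemma 4.1 and the $\pm$ in \eqref{eq:4.9} get absorbed into the choice of $\mu$ and $\lambda$; since we only claim \emph{some} signs work, I do not need to track them precisely. To make the bookkeeping honest I would, as in the proof of Lemma 3.1, rewrite $c_k$, $c_{k\pm 1}$ via \eqref{eq:3.5}, $\sqrt{c_k-c_k^2}$ via \eqref{eq:3.20}, and $\sqrt{x-x^2}$ via \eqref{eq:4.7}, turning the asserted identity into a polynomial identity in $\beta^{\pm 1}$, $x$, and $x^n$ over $\F$ (or $\F[i]$), which a symbolic computation verifies.

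The main obstacle is the sign and square-root bookkeeping: the functions $A_u(k,\cdot)$ are single-valued only after the conventions \eqref{eq:3.19}, \eqref{eq:3.20}, \eqref{eq:4.7} are fixed, and one must check that passing $x \mapsto x^q$ through \eqref{eq:4.7}--\eqref{eq:4.9} is consistent with those conventions rather than flipping a sign unexpectedly. Because the statement only asks for the existence of suitable $\mu,\lambda$, the cleanest route is to verify the \emph{set} equality $Z(k,x^q) = \{A_{\pm}(k-1,x)\} \cup \{A_{\pm}(k+1,x)\}$ at the level of unordered pairs, reducing it to: (i) each element of $Z(k,x^q)$, written in $\beta$-coordinates, equals one of the four candidates $A_{\pm}(k\mp 1, x)$; and (ii) $Z(k,x^q)$ has the right cardinality so that one candidate from each of $k-1$ and $k+1$ is hit. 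For (i) I rely on the algebraic identity coming out of Lemma 3.1, and for (ii) I note that the two elements of $Z(k,x^q)$ coincide only when $\sqrt{c_k-c_k^2}\sqrt{x^q-x^{2q}} = 0$, which (by \eqref{eq:3.20}, \eqref{eq:3.9} and $d>2$, so $x \ne 0,1$) forces $e \mid k$, a degenerate case that can be handled separately or absorbed since then $c_{k-1} = c_{k+1} = c$ anyway. This last cardinality check is where I expect to spend the most care; the rest is the ``longish computation'' already flagged in Lemma 3.1.
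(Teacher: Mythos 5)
Your proposal is correct and follows essentially the same route as the paper: substitute $x^q=A_v(1,x)$ from Lemma 4.1 into $Z(k,x^q)$, use \eqref{eq:4.8}--\eqref{eq:4.9} to rewrite $\sqrt{x^q-x^{2q}}$ in terms of $\sqrt{c_1-c_1^2}$ and $\sqrt{x-x^2}$, and then invoke Lemma 3.1 to recognize the resulting coefficients as $c_{k-1}$ and $c_{k+1}$. The pairing issue you flag is resolved in the paper exactly by the mechanism you describe: the $A\leftrightarrow A'$ symmetry in $Z(k,x^q)$ corresponds under the computation to $A_w(k,c_1)\leftrightarrow A'_w(k,c_1)$, which Lemma 3.1 sends to $c_{k-\epsilon}$ and $c_{k+\epsilon}$ respectively, so one element lands on each side automatically.
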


\begin{proof}
For each $ u \in \{\pm 1\}$,
\begin{equation}\label{eq:4.11}
A_u (k, x^q) = c_k +(1-2c_k) x^q +2u \sqrt{c_k-c_k^2}  \ \sqrt{x^q -x^{2q}}.
\end{equation}
By \eqref{eq:4.6}, we can replace $x^q$ by $A_v(1,x)$ for
some $v \in \{ \pm 1 \}$, after which
we can apply \eqref{eq:4.9} to the rightmost square root in 
\eqref{eq:4.11} to obtain
\begin{equation}\label{eq:4.12}
\begin{split}
&A_u (k, x^q) =  c_1+c_k - 2c_1c_k + \\ 
& + x(1-2c_1-2c_k+4c_1c_k) 
+2v(1-2c_k)\sqrt{c_1-c_1^2} \ \sqrt{x-x^2} \\
& -2w\sqrt{c_k-c_k^2} \  \Big(v(2c_1-1)\sqrt{x-x^2} + (2x-1)\sqrt{c_1-c_1^2} \ \Big),
\end{split}
\end{equation}
where $w \in \{\pm 1\}$ depends on $u$.
Once again applying \eqref{eq:4.9}, we see that \eqref{eq:4.12} reduces to
\begin{equation}\label{eq:4.13}
A_u (k, x^q) =x+(1-2x)A_w(k,c_1)
\pm 2\sqrt{x-x^2}\sqrt{A_w(k,c_1) - A_w(k,c_1)^2}.
\end{equation}
Repeating the entire argument above with $-u$ in place of $u$,
we see that \eqref{eq:4.13} holds with the signs of $u$ and $w$ reversed, i.e.,
\begin{equation}\label{eq:4.14}
A'_u (k, x^q) =x+(1-2x)A'_w(k,c_1)
\pm 2\sqrt{x-x^2}\sqrt{A'_w(k,c_1) - A'_w(k,c_1)^2}.
\end{equation}
By Lemma 3.1, there exists $\epsilon \in \{ \pm 1 \}$ for which
\[
A_w(k,c_1) = c_{k-\epsilon}, \quad A'_w(k,c_1) = c_{k+\epsilon}.
\]
Therefore \eqref{eq:4.13} and \eqref{eq:4.14} yield
\[
A_u (k, x^q)  \in Z(k-\epsilon,x), \quad
A'_u (k, x^q)  \in Z(k+\epsilon,x).
\]
Since $A_u (k, x^q)$ and $A'_u (k, x^q)$
are the two elements of the set $Z(k, x^q)$,
the lemma is proved.
\end{proof}

For ease in notation, write
$x_k: = x^{q^k}$ for the conjugates of $x$.
The following crucial theorem 
shows that for any integer $k$ with $0 \le k \le d$,
the set $\{ x_k, x_{d-k} \}$ is equal to the set
 $\{ A(k,x), A'(k,x)\}$.
\begin{thm}\label{Theorem 4.3}
Let $1-s^2 = c = c_1 \in \C$ and let $x$ be a zero of $g_s(y)$ of degree $d >2$
over $\F$.  Then
\begin{equation}\label{eq:4.15}
Z(k,x) = \{ x^{q^k}, x^{q^{d-k}} \}, \quad k=0,1,\dots, d.
\end{equation}
\end{thm}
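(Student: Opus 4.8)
The natural approach is induction on $k$, using Lemma 4.2 as the engine that relates the sets $Z(k,x^q)$ to the sets $Z(k\pm 1, x)$, together with the Frobenius action $x \mapsto x^q$ on the conjugates. First I would record the base cases $k=0$ and $k=1$: when $k=0$, $Z(0,x) = \{A(0,x),A'(0,x)\} = \{x,x\} = \{x^{q^0}, x^{q^d}\}$ since $c_0 = 0$, $\sqrt{c_0 - c_0^2} = 0$, and $x^{q^d} = x$ by minimality of $d$; when $k=1$, \eqref{eq:4.6} gives $x^q \in Z(1,x)$, and since $Z(1,x)$ has two elements one must ask which element is the other one. The idea is to show it is $x^{q^{d-1}}$. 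Applying Frobenius $d-1$ times to $x^q \in Z(1,x)$, or better, using that the two elements of $Z(1,x)$ are conjugate over $\F$ (they are the two roots of a quadratic over $\F[x]$ whose coefficients are symmetric functions lying in $\F[x^q,\dots]$ — more precisely, $\{A(1,x),A'(1,x)\}$ is Frobenius-stable as a set because $\sqrt{c_1 - c_1^2} \in \F$ and $\sqrt{x-x^2}$ satisfies \eqref{eq:4.8}), so that $(Z(1,x))^q = Z(1,x^q)$, and then invoking Lemma 4.2 with $k=1$.

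The heart of the argument is the inductive step. Suppose \eqref{eq:4.15} holds for all indices up to $k$ (or for $k-1$ and $k$). Apply Frobenius to the identity $Z(k,x) = \{x^{q^k}, x^{q^{d-k}}\}$. Using \eqref{eq:4.8} and the fact that $\sqrt{c_k - c_k^2} \in \F$ (by \eqref{eq:3.11}), raising to the $q$-th power sends $A(k,x)$ and $A'(k,x)$ into $Z(k,x^q)$; in fact $(Z(k,x))^q = Z(k,x^q)$, so
\[
Z(k, x^q) = \{ x^{q^{k+1}}, x^{q^{d-k+1}} \}.
\]
On the other hand, Lemma 4.2 tells us $Z(k,x^q) = \{A_\mu(k-1,x), A_\lambda(k+1,x)\}$ for some signs $\mu,\lambda$, i.e. $Z(k,x^q) \subseteq Z(k-1,x) \cup Z(k+1,x)$. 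By the induction hypothesis $Z(k-1,x) = \{x^{q^{k-1}}, x^{q^{d-k+1}}\}$. Matching the two descriptions of $Z(k,x^q)$: the element $x^{q^{d-k+1}}$ appears in both $Z(k,x^q)$ and $Z(k-1,x)$, so the \emph{other} element of $Z(k,x^q)$, namely $x^{q^{k+1}}$, must be the contribution from $Z(k+1,x)$ — that is, $x^{q^{k+1}} \in Z(k+1,x)$. A parallel bookkeeping (looking at which element comes from $Z(k+1,x)$ versus $Z(k-1,x)$, and using that $Z(k+1,x)$ also contains $x^{q^{d-k-1}}$ once we know it has two elements) pins down $Z(k+1,x) = \{x^{q^{k+1}}, x^{q^{d-k-1}}\}$, completing the step. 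One must handle the degenerate possibility that $Z(k+1,x)$ is a singleton or that the two candidate elements coincide; this is where \eqref{eq:3.9}, \eqref{eq:3.12}, and the claim that $c_j \neq c_k$ for $0 \le j < k \le e/2$ will be needed, and also the distinctness of the zeros of $g_s(y)$ (noted after \eqref{eq:1.4}) to rule out collisions among the $x^{q^i}$.

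The main obstacle I anticipate is the careful sign/index bookkeeping: Lemma 4.2 only yields $Z(k,x^q) \subseteq Z(k-1,x)\cup Z(k+1,x)$ with unspecified signs $\mu,\lambda$, and $\epsilon = \pm 1$ inside its proof, so one has to argue that the ``crossing'' happens consistently — i.e. that $A(k+1,x)$ (not $A'(k+1,x)$) is the new conjugate $x^{q^{k+1}}$ — using that consecutive sets overlap in exactly one element and that all the relevant elements are genuinely distinct. It may be cleanest to phrase the induction as: \emph{for each $k$, $x^{q^k} \in Z(k,x)$ and $x^{q^{d-k}} \in Z(k,x)$}, deduce these two are distinct (hence equal $Z(k,x)$) except when $k \equiv d-k$, and separately treat whether $A$ or $A'$ realizes $x^{q^k}$ by continuity of the sign choice along $k$. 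Once \eqref{eq:4.15} is established for $k = 0,1,\dots,d$ the symmetry $Z(d-k,x) = Z(k,x)$ (immediate from the statement) will also fall out, which is a useful consistency check.
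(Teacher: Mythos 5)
Your proposal follows essentially the same route as the paper's proof: induction on $k$ with base cases $k=0,1$, using the Frobenius equivariance $(Z(k,x))^q = Z(k,x^q)$ (from \eqref{eq:4.8} and $\sqrt{c_k-c_k^2}\in\F$) together with Lemma 4.2 to match $\{x^{q^{k+1}}, x^{q^{d-k+1}}\}$ against $\{A_\mu(k-1,x), A_\lambda(k+1,x)\}$, and ruling out the ``crossed'' assignment exactly as you anticipate — via the distinctness of the conjugates (degree $d>2$) and of the zeros in \eqref{eq:4.10}, which is what forces $c_{k-1}=c_{k+1}\Rightarrow c_k\in\{0,1\}\Rightarrow k=d/2$. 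The paper isolates $k=d/2$ as a separate preliminary case (showing $c_{d/2}=1$ and $c_{d/2\pm 1}=1-c$), but this is the same degenerate coincidence your bookkeeping flags; note only that at this stage one cannot yet invoke the Section 3 fact that $c_j\ne c_k$ for $0\le j<k\le e/2$ in terms of $d$, since $d=e$ is only proved afterward in Theorem 4.4 — the distinctness of the zeros is the tool that actually closes that step.
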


\begin{proof}
Since $c_0 = 0$ and $x_0 = x_d =x$, \eqref{eq:4.15} holds for $k=0$.
Next we prove \eqref{eq:4.15} for $k=1$.
Fix $v$ as in Lemma 4.1, so that $x^q=A_v(1,x)$. Then
\[
Z(1,x^q) = \{A_v(1,x)^q, A'_v(1,x)^q\} 
=\{x_2, A'_v(1,x)^q\},
\]
where the first equality follows from \eqref{eq:4.8}.
On the other hand, by Lemma 4.2 with $k=1$,
\[
Z(1,x^q) = \{x, A_\lambda(2,x)\}.
\]
Since $x$ has degree $d > 2$, we cannot have $x = x_2$.
Therefore $x = A'_v(1,x)^q$.  Raising both sides to the power
$q^{d-1}$, we see that $x_{d-1} = A'_v(1,x)$.
Consequently,
\[
Z(1,x) = \{A_v(1,x), A'_v(1,x)\} = \{x_1, x_{d-1}\},
\]
which proves \eqref{eq:4.15} for $k=1$.

Now let $1 \le k < d$ and assume as induction hypothesis that
\begin{equation}\label{eq:4.16}
Z(j,x) = \{ x_j, x_{d-j} \}, \quad  0 \le j \le k.
\end{equation}
We need to prove
\begin{equation}\label{eq:4.17}
Z(k+1,x) = \{ x_{k+1}, x_{d-k-1} \}.
\end{equation}
We will first prove \eqref{eq:4.17}
when $k=d/2$ for even $d$. After that we give a proof for the
other values of $k$.   For brevity, write $D = d/2$.
We begin by showing that $c_D=1$.
By \eqref{eq:4.16}  with $j=k = D$,
\[
Z(D,x) = \{ A(D,x), A'(D,x) \} = \{x_D\}.
\]
Thus $A(D,x)=A'(D,x)=x_D$, so that $c_D \in \{0,1\}$.
We cannot have $c_D=0$, otherwise 
$x_D = x$, contradicting the fact that $x$ has degree $d$.
Thus
\begin{equation}\label{eq:4.18}
c_D=1.
\end{equation}
By \eqref{eq:3.2} and \eqref{eq:3.6} with $k=D$,
\[
c_{D+1} + c_{D-1} = 2-2c, \quad c_{D+1}c_{D-1} = (1-c)^2.
\]
Solving this system, we obtain
\begin{equation}\label{eq:4.19}
c_{D+1} = c_{D-1} =1-c.
\end{equation}
By \eqref{eq:4.16}  with $j=D-1$,
\[
\{ x_{D-1}, x_{D+1} \} = Z(D-1,x) = Z(D+1,x),
\]
where the last equality follows from \eqref{eq:4.19}.
This completes the proof of \eqref{eq:4.17} when $k=d/2$.
We proceed to prove \eqref{eq:4.17} under the assumption that $k \ne d/2$.

By \eqref{eq:4.16} with $j=k$,
we have
$x_k \in Z(k,x)$,
so taking $q$-th powers yields
$x_{k+1} \in Z(k, x^q)$.
Therefore, by Lemma 4.2,
\begin{equation}\label{eq:4.20}
x_{k+1} \in \{A_\mu (k-1,x), A_\lambda (k+1,x)\}.
\end{equation}
Suppose for the purpose of contradiction that
$x_{k+1} = A_\mu (k-1,x)$.
Then $x_{k+1}$ lies in the set 
$Z(k-1,x) = \{ x_{k-1}, x_{d-k+1} \}$,
where the equality follows from 
\eqref{eq:4.16} with $j = k-1$.
We cannot have $x_{k+1} = x_{k-1}$, because $x$ 
and its conjugates have degree $d > 2$.
Nor can we have $x_{k+1} = x_{d-k+1}$,
since this would imply that $k=d/2$.
Thus we obtain our desired contradiction.
In view of \eqref{eq:4.20}, it therefore follows that
$x_{k+1} \in Z(k+1,x)$.
To prove \eqref{eq:4.17}, it now suffices to prove
\begin{equation}\label{eq:4.21}
x_{d-k-1} \in Z(k+1,x),
\end{equation}
which is equivalent (via taking the $q$-th power) to
\begin{equation}\label{eq:4.22}
x_{d-k} \in Z(k+1,x^q).
\end{equation}

From Lemma 4.2,
\begin{equation}\label{eq:4.23}
Z(k+1,x^q)=\{A_\mu (k,x), A_\lambda (k+2,x) \}.
\end{equation}
By \eqref{eq:4.16} with $j=k$,
\[
A_\mu (k,x) \in Z(k,x) = \{ x_k, x_{d-k} \}.
\]
If $A_\mu (k,x) = x_{d-k}$, then \eqref{eq:4.22}
holds by \eqref{eq:4.23}, and the proof is complete.
It remains to prove that the alternative
\begin{equation}\label{eq:4.24}
A_\mu (k,x) = x_k
\end{equation}
is impossible.   Thus assume for the purpose of contradiction
that \eqref{eq:4.24} holds.
Then by \eqref{eq:4.23}, 
$x_k \in Z(k+1,x^q)$.
Raise both sides to the $q^{d-1}$-th power to obtain
$x_{k-1} \in Z(k+1, x)$.
By \eqref{eq:4.6} with $j=k-1$, we see that
$x_{k-1} \in Z(k-1, x)$.
It follows that for some choice of the  $\pm$ signs,
\begin{equation*}
\begin{split}
x_{k-1} =&
c_{k-1} + x -2xc_{k-1} \pm 2 \sqrt{c_{k-1} - c_{k-1}^2} \sqrt{x-x^2} \\
=& c_{k+1}+x -2xc_{k+1} \pm 2 \sqrt{c_{k+1} - c_{k+1}^2} \sqrt{x-x^2}.
\end{split}
\end{equation*}
Appealing to the distinctness of the zeros in \eqref{eq:4.10},
we see that this is only possible if $c_{k-1} = c_{k+1}$.
By \eqref{eq:3.2} and \eqref{eq:3.6},
\[
c_{k-1} + c_{k+1} = 2c + 2c_k -4cc_k, \quad c_{k-1}c_{k+1} = (c - c_k)^2.
\]
Thus
\[
0 = (c_{k-1} - c_{k+1})^2 = 
(2c+2c_k-4cc_k)^2 -4(c - c_k)^2 =16(c-c^2)(c_k - c_k^2),
\]
which implies that $c_k \in \{ 0, 1 \}$.
It follows that $Z(k,x)$ is a singleton set.
By \eqref{eq:4.16} with $j=k$, we have
$Z(k,x) = \{ x_k, x_{d-k} \}$,
so that $x_k = x_{d-k}$.
Thus we obtain the contradiction $k = d/2$.
As a result, \eqref{eq:4.24} cannot hold, and the proof is complete.
\end{proof}

The next theorem shows that $d=e$, where
$e$ is the period of $S(c)$.

\begin{thm}\label{Theorem 4.4}
Let $1-s^2 = c = c_1 \in \C$ and let $x$ be a zero of $g_s(y)$ of degree $d >2$
over $\F$.  Then $d=e$.
\end{thm}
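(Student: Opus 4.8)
The plan is to show that the sequence of conjugates $x, x_1, x_2, \dots$ of the zero $x$ has period exactly $d$ (which is automatic since $x$ has degree $d$), and then to transfer this periodicity back to the sequence $S(c) = \langle c_k \rangle$ via Theorem 4.3, concluding $e \mid d$; and conversely to show $d \mid e$, so that $d = e$. The engine for both directions is the identity $Z(k,x) = \{x_k, x_{d-k}\}$ for $0 \le k \le d$ from Theorem 4.3, together with the closed forms \eqref{eq:3.5}, \eqref{eq:3.19}, \eqref{eq:3.20} that make the functions $A(k,\cdot)$ single-valued in terms of $\beta$.

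For the direction $e \mid d$: apply Theorem 4.3 with $k = d$. Since $x_d = x$ and $x_{d-d} = x_0 = x$, we get $Z(d,x) = \{x\}$, i.e.\ $A(d,x) = A'(d,x) = x$. By \eqref{eq:4.3}--\eqref{eq:4.4} this forces $2\sqrt{c_d - c_d^2}\sqrt{x-x^2} = 0$, and since $x \notin \{0,1\}$ (as $x$ has degree $d > 2$), we get $c_d \in \{0,1\}$; moreover $A(d,x) = x$ then reads $c_d + x - 2c_d x = x$, i.e.\ $c_d(1 - 2x) = 0$, forcing $c_d = 0$. By \eqref{eq:3.9}, $e \mid d$.

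For the direction $d \mid e$: I would run essentially the argument already used in the proof of Theorem 4.3, now with $k = e$ (or, more carefully, with $k$ replaced by its reduction so as to stay in range $0 \le k \le d$ — this is the point needing care). Since $c_e = c_0 = 0$ by \eqref{eq:3.7}, and since by \eqref{eq:3.7} the whole sequence $\langle c_k \rangle$ is $e$-periodic and even ($c_k = c_{-k}$), the single-valued square roots in \eqref{eq:3.19}--\eqref{eq:3.20} satisfy $\sqrt{c_{e+k}} = \pm\sqrt{c_k}$ etc.; combined with $\ord(\beta) = 2e$ from \eqref{eq:3.8}, one checks $A(e,x) = A(0,x) = x$ (the $\beta$-powers in \eqref{eq:3.20} are genuinely $e$-periodic since $\beta^{2e} = 1$). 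Thus $x \in Z(e,x)$. Now I want to say $Z(e,x) = \{x_e, x_{d-e}\}$ by Theorem 4.3 — but Theorem 4.3 is stated only for $0 \le k \le d$, so if $e > d$ I must first reduce. The clean way: from $e \mid d$ we already know $d = me$ for some positive integer $m$; I want $m = 1$. Suppose $m \ge 2$. Then $0 < e < d$, so Theorem 4.3 with $k = e$ applies directly and gives $\{x_e, x_{d-e}\} = Z(e,x) \ni x$. Hence either $x_e = x$ or $x_{d-e} = x$; either way, since $x$ has degree exactly $d$, the exponent ($e$ or $d - e$) must be a multiple of $d$. But $0 < e < d$ and $0 < d - e < d$, a contradiction. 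Therefore $m = 1$, i.e.\ $d = e$.

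The main obstacle I anticipate is the bookkeeping in verifying $x \in Z(e,x)$ — i.e.\ that the single-valued determination of $A(e,x)$ (with square roots pinned down by \eqref{eq:3.19}--\eqref{eq:3.20} in terms of $\beta$) really returns $x$ rather than $x_{e}' $ or some conjugate, given that the sign conventions for $\sqrt{c_k}$ and $\sqrt{x-x^2}$ are subscript-dependent. One must use $\beta^{2e} = 1$ to see $\sqrt{c_e} = \tfrac{-i}{2}(\beta^e - \beta^{-e}) = 0$ and $\sqrt{c_e - c_e^2} = \tfrac{-i}{4}(\beta^{2e} - \beta^{-2e}) = 0$, whence $A(e,x) = c_e + x - 2c_e x = x$ outright — so in fact once $c_e = 0$ is in hand this step is immediate and the obstacle dissolves; the only real subtlety is remembering that the range restriction $0 \le k \le d$ in Theorem 4.3 must be respected, which the $d = me$ reduction above handles cleanly.
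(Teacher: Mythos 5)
Your proposal is correct and follows essentially the same route as the paper: apply Theorem 4.3 at $k=d$ to get $Z(d,x)=\{x\}$ and hence $c_d=0$, then rule out any index $k$ with $0<k<d$ and $c_k=0$ (you take $k=e$) because Theorem 4.3 would force $x_k=x$, contradicting that $x$ has degree $d$; by \eqref{eq:3.9} this pins down $d=e$. The extra care you take with the sign conventions in \eqref{eq:3.19}--\eqref{eq:3.20} and with the range restriction $0\le k\le d$ is sound but not a departure from the paper's argument.
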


\begin{proof}
By Theorem 4.3, $Z(d,x)= \{ x \}$.  Hence $A(d,x) = A'(d,x) =x$,
so that $c_d=0$.   Assume for the purpose of contradiction that
$c_k=0$ for some $k$ with $0 < k < d$.  Then
$A(k,x) = A'(k,x) =x$, so that $Z(k,x)= \{ x \}$.
Again by Theorem 4.3, it follows that $x_k = x_{d - k} = x$, which is not possible.
This contradiction shows that $d$ equals the smallest positive integer $k$
for which $c_k=0$.  Consequently, by \eqref{eq:3.9}, $d=e$.
\end{proof}

Define the monic polynomial $N(y) \in \F[y]$ of degree $d$ by
\begin{equation}\label{eq:4.25}
N(y): =
\begin{cases}
y \prod\limits_{k=1}^{(d-1)/2} (y-c_k)^2, & \ \text{  if  }  \ 2\nmid d \\
(y^2-y) \prod\limits_{k=1}^{(d-2)/2} (y-c_k)^2, & \ \text{  if  }  \ 2\mid d.
\end{cases}
\end{equation}
Note that the coefficients of $N(y)$ can be expressed as polynomials
in $c$ (and hence in $s$) over the integers.  For some small values
of $d$, Corollaries 4.6--4.12 below give explicit formulas
for the coefficients of $N(y)$.

For a zero $x$ of $g_s(y)$, define the monic polynomial $I_x(y)$ 
of degree $d$ by
\begin{equation}\label{eq:4.26}
I(y) = I_x(y): = N(y) - N(x).
\end{equation}
Theorem 4.5 below  shows that these $I_x(y)$ are the 
 irreducible factors of $g_s(y)$.
Thus the monic irreducible factors of $g_s(y)$ 
are all identical except for their constant terms, 
and Theorem 4.5 provides a way of expressing the
coefficients of the nonconstant terms
as polynomials in $c$ over the integers.  
\begin{thm}\label{Theorem 4.5}
Let $1-s^2 = c = c_1 \in \C$ and 
let $x$ be any zero of $g_s(y)$ of degree $d >2$
over $\F$. Then $I_x(y)$ is the monic irreducible polynomial of $x$ over $\F$. 
\end{thm}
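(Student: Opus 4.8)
The plan is to show that the $d$ conjugates $x_0,x_1,\dots,x_{d-1}$ of $x$ are precisely the $d$ roots of $I_x(y)$, counted correctly, which will identify $I_x(y)$ with $\prod_{k=0}^{d-1}(y - x_k)$, the minimal polynomial of $x$ (since $x$ has degree $d=e$ by Theorem 4.4). The central tool is Theorem 4.3, which says $Z(k,x) = \{x_k, x_{d-k}\}$ for $0 \le k \le d$; equivalently, each conjugate $x_k$ equals one of $A(k,x), A'(k,x)$ (or of $A(d-k,x),A'(d-k,x)$). Recalling the definition \eqref{eq:4.3}--\eqref{eq:4.4}, for each $k$ the pair $\{A(k,x),A'(k,x)\}$ consists of the two roots of the quadratic $y \mapsto (y - x)^2 \cdot(\text{something}) $; more precisely, squaring \eqref{eq:4.9}-style, both $A(k,x)$ and $A'(k,x)$ satisfy
\[
\bigl(A - (c_k + x - 2c_k x)\bigr)^2 = 4(c_k - c_k^2)(x - x^2),
\]
and hence $A_{}(k,x)$ and $A'(k,x)$ are the two roots in $\F[x]$ of $(y - c_k)(y-1+c_k) - \text{(stuff)} = 0$. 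The key algebraic identity I would establish is that $N(y) - N(x)$ vanishes at $y = A(k,x)$ and $y = A'(k,x)$ for every $k$; granting that, Theorem 4.3 immediately gives $I_x(x_k) = 0$ for all $k$, so all $d$ conjugates are roots of the degree-$d$ monic polynomial $I_x(y)$, forcing $I_x(y) = \prod_{k=0}^{d-1}(y-x_k)$, which is the minimal polynomial of $x$.

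So the real work is the identity $N(A(k,c_1')) = N(c_1')$ where $c_1'$ plays the role of a "variable" $x$ — that is, I would prove the polynomial identity that $N(y) - N(z)$, viewed as an element of $\F[y,z]$, is divisible by the factor cutting out the locus $\{y = A(k,z), y = A'(k,z)\}$ for each $k$ in the relevant range. The cleanest route: parametrize via $\beta$. Recall from \eqref{eq:3.5} and \eqref{eq:3.19}--\eqref{eq:3.20} that $c_k = -\tfrac14(\beta^k-\beta^{-k})^2$; I would introduce a formal variable $t$ (replacing $x$) with a chosen square root of $t - t^2$, write $t = -\tfrac14(\gamma - \gamma^{-1})^2$ for a new variable $\gamma$, and check that under $\gamma \mapsto \gamma$ fixed, the map $t \mapsto A(k,t)$ corresponds to $\gamma \mapsto \beta^k \gamma$ (up to sign/inversion), exactly mirroring Lemma 3.1. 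Under this substitution, $N$ becomes (up to normalization) a symmetric function of $\gamma$ and $\gamma^{-1}$ — essentially $N(t)$ pulls back to something like $-\tfrac14(\gamma^{\,?}-\gamma^{-?})^2$ times lower-order, by the same telescoping that produced \eqref{eq:3.15} — and the product structure of $N$ in \eqref{eq:4.25} over $k = 1,\dots,\lfloor(d-1)/2\rfloor$ or $\lfloor(d-2)/2\rfloor$ exactly matches the orbit of $\gamma$ under multiplication by $\beta$ of order $2e = 2d$. Invariance of $N(t)$ under $\gamma \mapsto \beta\gamma$ then gives $N(A(1,t)) = N(t)$, and iterating gives $N(A(k,t)) = N(t)$ for all $k$; symmetry $A'(k,t) = A_{-1}(k,t)$ corresponds to $\gamma \mapsto \beta^{-k}\gamma$ and is handled identically.

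I would organize the proof as follows. (1) Record the quadratic relation satisfied by $A(k,x)$ and $A'(k,x)$ over $\F[x]$, and note via \eqref{eq:4.9} that the set $Z(\cdot,\cdot)$ is closed in the expected way. (2) State and prove the polynomial identity $N(A(k,x)) = N(x)$ for $1 \le k \le d-1$ (equivalently $N(A'(k,x)) = N(x)$), using the $\beta$-parametrization sketched above; this is the technical heart. (3) Combine with Theorem 4.3: since $x_k \in Z(k,x) = \{A(k,x), A'(k,x)\}$ for each $k$, step (2) gives $N(x_k) = N(x)$, i.e. $x_k$ is a root of $I_x(y) = N(y)-N(x)$, for $k = 0,1,\dots,d-1$. (4) Since $x$ has degree $d$ over $\F$ (Theorem 4.4, as $d=e$), its $d$ conjugates $x_0,\dots,x_{d-1}$ are distinct, so $I_x(y)$, being monic of degree $d$ with these $d$ distinct roots, must equal $\prod_{k=0}^{d-1}(y - x_k)$, the minimal polynomial of $x$ — hence $I_x(y)$ is irreducible over $\F$. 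The main obstacle is step (2): verifying the $\beta$-substitution genuinely converts $N(y)$ into a function invariant under $\gamma \mapsto \beta\gamma$ requires carefully matching the exponent ranges in \eqref{eq:4.25} to the $\beta$-orbit structure and keeping track of the sign conventions in \eqref{eq:3.19}--\eqref{eq:3.20}; it is plausible this is exactly the "longish computation" already alluded to in Lemma 3.1, but it must be done cleanly here since everything hinges on it. A possible alternative to the explicit substitution is an inductive argument: show $N(A(k,x)) - N(A(k-1,x))$ is divisible by enough factors of $N' $ to vanish, using \eqref{eq:3.6} and Lemma 3.1 to relate consecutive $A(k,x)$; but the generating-function approach via $\beta$ is likely shorter and more transparent.
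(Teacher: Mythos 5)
Your outline is workable and, if completed, would prove the theorem, but it takes a genuinely different and substantially heavier route than the paper, and its technical heart is left unexecuted. The paper never proves (and never needs) your key identity $N(A(k,x))=N(x)$, nor does it check that every conjugate $x_k$ is a root of $I_x(y)$. It uses only the one-line difference-of-squares computation $A(k,x)\,A'(k,x)=(x-c_k)^2$ from \eqref{eq:4.3}--\eqref{eq:4.4}, together with $A(0,x)=x$ (since $c_0=0$) and $A(d/2,x)=1-x$ when $d$ is even (since $c_{d/2}=1$ by \eqref{eq:4.18}). Multiplying these over the pairs $\{x_k,x_{d-k}\}=Z(k,x)$ furnished by Theorem 4.3 gives $(-1)^{d-1}\prod_{k=0}^{d-1}x_k=N(x)$, so $N(x)$ is $\pm$ the norm of $x$ and hence lies in $\F$; thus $I_x(y)=N(y)-N(x)$ is a monic polynomial in $\F[y]$ of degree $d$ vanishing at $x$, which has degree $d$, and it must therefore be the minimal polynomial --- no further root-counting is needed. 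Your route instead hinges on the invariance $N(A(k,t))=N(t)$. That identity is true, and your $\gamma$-parametrization does establish it: writing $t=-\tfrac14(\gamma-\gamma^{-1})^2$ one finds $t-c_k=\tfrac14(\beta^{2k}+\beta^{-2k}-\gamma^2-\gamma^{-2})$, whence $(-1)^{d-1}N(t)=\prod_{k=0}^{d-1}\bigl(-\tfrac14(\beta^k\gamma-\beta^{-k}\gamma^{-1})^2\bigr)$, which is visibly invariant under $\gamma\mapsto\beta\gamma$ because $\beta$ has order $2d$. So there is no gap in principle; but observe that the pairing fact you would have to verify along the way ($\tau_k\tau_{d-k}=(t-c_k)^2$ in your notation) is precisely the identity that already finishes the proof the short way, so the full invariance statement and your steps (3)--(4) are a detour. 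If you keep your approach, step (2) must actually be carried out rather than sketched, with attention to the four-fold ambiguity $\gamma\mapsto\pm\gamma^{\pm1}$ and the sign conventions of \eqref{eq:3.19}--\eqref{eq:3.20}; the ambiguity is ultimately harmless since you only need $N$ to take the value $N(x)$ on both elements of $Z(k,x)$.
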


\begin{proof}
Theorem 4.3 shows that
\begin{equation}\label{eq:4.27}
\{ A(k,x), A'(k,x) \} = \{x_k, x_{d-k} \}, \quad 0 \le k \le d/2.
\end{equation}
When $0 < k < d/2$, 
we have
$A(k,x) A'(k,x) = (x-c_k)^2$.
When $k=d/2$ with $d$ even, the proof of \eqref{eq:4.18} shows that $c_{k} =1$,
so that $A(k,x) = 1-x$. When $k=0$, we have $c_k=0$, so that $A(k,x) = x$.
Thus \eqref{eq:4.27} yields
\begin{equation}\label{eq:4.28}
(-1)^{d-1} \prod\limits_{k=0}^{d-1} x_k = N(x).
\end{equation}
In particular, $I_x(y) \in \F[y]$, since $N(x)$ 
equals $(-1)^{d-1}$ times the norm of $x$.
As $x$ has degree $d$ and $x$ is a zero of the polynomial
$I_x(y)$ of degree $d$, it follows that
$I_x(y)$ is the monic irreducible polynomial of $x$ over $\F$.

\end{proof}

\begin{cor}\label{Corollary 4.6}
Let $s \in \{ \pm 1/2 \}$ and suppose that $q \equiv \pm 1 \pmod {12}$,
so that $c=1-s^2 = 3/4 \in \C$.  Then each irreducible factor $I(y)$ of $g_s(y)$
has the form
\[
I(y) = y^3 - \frac{3}{2} y^2 + \frac{9}{16} y - m
\]
for some $m \in \F$.
\end{cor}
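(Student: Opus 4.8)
The plan is to obtain the corollary as a direct specialization of Theorems 4.4 and 4.5 to the value $c = 1 - s^2 = 3/4$, the only preliminary point being to confirm that $3/4 \in \C$. For this, note that since $q$ is odd and $q \equiv \pm 1 \pmod{12}$, the prime $p$ is neither $2$ nor $3$, so $3 \ne 0$ in $\F$ and $3/4$ is well defined. Then $\rho(1 - 3/4) = \rho(1/4) = 1$ and $\rho(3/4) = \rho(3)$, and a short computation with quadratic reciprocity --- writing $q = p^f$ and using $p^2 \equiv 1 \pmod{12}$ --- shows that $\rho(3) = 1$ if and only if $q \equiv \pm 1 \pmod{12}$. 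Hence $c = 3/4$ satisfies \eqref{eq:3.1}, so $c \in \C$ and $g_s(y)$ has a zero of degree exceeding $2$.

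Next I would compute the sequence $S(3/4) = \langle c_k \rangle$ explicitly. Substituting $c = 3/4$ into \eqref{eq:3.2} gives the recurrence $c_{k+1} = -c_k - c_{k-1} + 3/2$ with $c_0 = 0$ and $c_1 = 3/4$, whence $c_2 = 3/4$ and $c_3 = 0$; one may instead just evaluate the polynomials $c_2 = -4c^2 + 4c$ and $c_3 = 16c^3 - 24c^2 + 9c$ at $c = 3/4$. Since $c_1$ and $c_2$ are nonzero while $c_3 = 0$, the period of $S(3/4)$ is $e = 3$ by \eqref{eq:3.9}.

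Finally I would invoke the two main theorems. By Theorem 4.4, every zero $x$ of $g_s(y)$ has degree $d = e = 3$ over $\F$. Since $d$ is odd, the first case of \eqref{eq:4.25} gives $N(y) = y(y - c_1)^2 = y(y - 3/4)^2 = y^3 - \frac{3}{2} y^2 + \frac{9}{16} y$, so by Theorem 4.5 the monic irreducible polynomial of $x$ over $\F$ is $I_x(y) = N(y) - N(x) = y^3 - \frac{3}{2} y^2 + \frac{9}{16} y - m$ with $m = N(x) \in \F$. Because $s = \pm 1/2 \ne \pm 1$ the zeros of $g_s(y)$ are distinct, so every monic irreducible factor of $g_s(y)$ is the minimal polynomial of one of its zeros and hence equals $I_x(y)$ for some such $x$; this gives the asserted form for each monic irreducible factor of $g_s(y)$, and in particular verifies the conjecture of the second author recalled in Section~1. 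There is no genuinely hard step: the whole argument is a specialization of Theorems 4.4 and 4.5, and the only computation requiring any care is the classical evaluation that $3 \in \f$ is a square exactly when $q \equiv \pm 1 \pmod{12}$, used in the first paragraph.
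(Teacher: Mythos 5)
Your proposal is correct and follows essentially the same route as the paper: compute $S(3/4)$ to find $c_0=c_3=0$, $c_1=c_2=3/4$, conclude $e=d=3$ from Theorems 4.4 and 4.5, and read off $N(y)=y(y-3/4)^2$. The only addition is your explicit reciprocity check that $\rho(3)=1$ exactly when $q\equiv\pm1\pmod{12}$, which the paper folds into the hypothesis ``so that $c=3/4\in\C$'' without proof; that check is accurate and harmless.
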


\begin{proof}
Computing the first four terms of the 
sequence $S(c)$ starting with $c_0$, we find that
$c_0=c_3=0, \ c_1=c_2=3/4$.
Thus $S(c)$ has period $e=3$, so for any zero $x$
of $g_s(y)$, the irreducible factor 
$I_x(y)$ has degree $d=3$.
The result now follows from \eqref{eq:4.26}, since
\[
N(y) = y(y-3/4)^2 = y^3 - \frac{3}{2} y^2 + \frac{9}{16} y.
\]
\end{proof}

\begin{cor}\label{Corollary 4.7}
Suppose that $q \equiv \pm 1 \pmod {8}$, so that $\rho(2) = 1$.
Let \mbox{$s \in \{ \pm 1/\sqrt{2} \},$} 
so that $c=1-s^2 = 1/2 \in \C$.  
Then each irreducible factor $I(y)$ of $g_s(y)$
has the form
\[
y^4 - 2y^3 + \frac{5}{4} y^2 - \frac{1}{4} y - m
\] 
for some $m \in \F$.
\end{cor}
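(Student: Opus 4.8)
The plan is to follow the proof of Corollary 4.6 almost verbatim, the only substantive changes being the value of $c$, the resulting period, and the expansion of $N(y)$. First I would confirm that the standing hypothesis \eqref{eq:3.1} (equivalently \eqref{eq:2.27}) is in force. Since $q\equiv\pm1\pmod 8$ we have $\rho(2)=1$, hence $\rho(1/2)=\rho(2^{-1})=\rho(2)=1$ and likewise $\rho(1-1/2)=\rho(1/2)=1$; thus $c=1-s^2=1/2\in\C$, and $s=\pm1/\sqrt2\neq0$, so the machinery of Section~4 applies and $g_s(y)$ has a zero of degree exceeding $2$, all of whose conjugates (and indeed all zeros of $g_s(y)$) have the same degree $d$.

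Next I would compute the sequence $S(c)=\langle c_k\rangle$ of \eqref{eq:3.2} for $c=1/2$. Here $2-4c=0$ and $2c=1$, so the recurrence collapses to $c_{k+1}=1-c_{k-1}$; starting from $c_0=0$, $c_1=1/2$ we get $c_2=1$, $c_3=1/2$, $c_4=0$, so $S(c)$ has period $e=4$. By Theorem 4.4 the common degree of the zeros of $g_s(y)$ is $d=e=4>2$, so Theorem 4.5 applies and, for any zero $x$, $I_x(y)=N(y)-N(x)$ is the monic irreducible polynomial of $x$ over $\F$, with $N(y)$ given by \eqref{eq:4.25}.

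Since $d=4$ is even, \eqref{eq:4.25} gives $N(y)=(y^2-y)\prod_{k=1}^{1}(y-c_k)^2=(y^2-y)(y-1/2)^2$, which expands to $y^4-2y^3+\frac54y^2-\frac14y$. Hence by \eqref{eq:4.26} every irreducible factor $I(y)=N(y)-N(x)$ has the asserted form with $m=N(x)\in\F$. There is no real obstacle: the whole argument is the four-term period computation $c_0=0,\ c_1=1/2,\ c_2=1,\ c_3=1/2,\ c_4=0$ together with the one-line expansion of $N(y)$; the only point meriting a moment's care is verifying that $\rho(2)=1$ forces $c=1/2\in\C$ (so that Section~4 is applicable) and that $e=4>2$ (so that the degree hypothesis of Theorems 4.4 and 4.5 is met), both of which hold.
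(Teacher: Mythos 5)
Your proposal is correct and follows essentially the same route as the paper: compute $S(1/2)$ via the collapsed recurrence to get $c_0=0$, $c_1=c_3=1/2$, $c_2=1$, $c_4=0$, hence $e=d=4$, then invoke Theorems 4.4 and 4.5 and expand $N(y)=(y^2-y)(y-1/2)^2$. The extra verification that $\rho(2)=1$ forces $c=1/2\in\C$ is a harmless elaboration of what the corollary's statement already asserts.
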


\begin{proof}
We have
$c_0=c_4=0, \ c_1=c_3=1/2, \ c_2 =1$.
Thus $S(c)$ has period $e=4$, so each irreducible factor
$I(y)=N(y) - N(x)$ has degree $d=4$.
The result now follows, as
\[
N(y) = (y^2 - y)(y-1/2)^2 = y^4 - 2y^3 + \frac{5}{4} y^2 - \frac{1}{4} y.
\]
\end{proof}

\begin{cor}\label{Corollary 4.8}
Suppose that $q \equiv \pm 1 \pmod {20}$,
so that $\rho(5)=1$. Set
$c=(5 +\sqrt{5})/8$ for either choice of the square root.   Then $c \in \C$,  
and each irreducible factor $I(y)$ of $g_s(y)$
has the form
\[
y^5 - \frac{5}{2}y^4 + \frac{35}{16} y^3 - \frac{25}{32} y^2 + \frac{25}{256}y - m
\]
for some $m \in \F$.
\end{cor}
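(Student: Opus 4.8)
The plan is to follow the exact template set by Corollaries 4.6 and 4.7: identify the period $e$ of the sequence $S(c)$ for the given value $c=(5+\sqrt5)/8$, conclude from Theorems 4.4 and 4.5 that every irreducible factor of $g_s(y)$ has degree $d=e$ and equals $N(y)-N(x)$, and then compute $N(y)$ explicitly from its definition in \eqref{eq:4.25}. So the first step is to verify $c\in\C$: by hypothesis $q\equiv\pm1\pmod{20}$ forces $\rho(5)=1$, hence $\sqrt5\in\F$ and $c\in\F$; one then checks $\rho(c)=\rho(1-c)=1$. Here $1-c=(3-\sqrt5)/8$, and since $(3-\sqrt5)(3+\sqrt5)=4$ one has $\rho(1-c)=\rho(1-c')$ where $c'=(5-\sqrt5)/8$ is the conjugate choice, so the two sign choices for $\sqrt5$ give the pair $\{c,1-c\}$ up to the relation $c(1-c)=(25-5)/64=5/16$, whence $\rho(c)\rho(1-c)=\rho(5)=1$; a short direct argument (or invoking that $c$ is a value $C_j$ of the full-period sequence, cf. \eqref{eq:3.15}) then pins down $\rho(c)=\rho(1-c)=1$, so $c\in\C$.

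The second step is the recurrence computation. Using $c_0=0$, $c_1=c$, and $c_{k+1}=(2-4c)c_k-c_{k-1}+2c$, I would compute $c_2,c_3,c_4,c_5$ as polynomials in $c$, then substitute $c=(5+\sqrt5)/8$ and simplify using $(\sqrt5)^2=5$. The expected outcome, matching the pattern of the previous corollaries and the stated degree $5$, is that $S(c)$ has period $e=5$ with $c_1=c_4$, $c_2=c_3$, and $c_5=0$; explicitly $c_2=c_3=(5-\sqrt5)/8=1-c$ (note $c+(1-c)=1$ is forced by the $k=1$ symmetry being absent here—rather, one simply computes and checks $c_2=-4c^2+4c$ evaluates to $1-c$, equivalently $4c^2-5c+1=0$, i.e. $c$ is a root of $4y^2-5y+1$; but $(5\pm\sqrt5)/8$ are roots of $64y^2-80y+20=0$, i.e. $16y^2-20y+5=0$, so one must instead verify $c_2=c_3$ and $c_5=0$ directly). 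The key relation to confirm is $c_5=0$ and $c_k\neq0$ for $0<k<5$, which by \eqref{eq:3.9} gives $e=5$.

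The third step: once $e=5$, hence $d=5$ is odd, formula \eqref{eq:4.25} gives
\[
N(y)=y(y-c_1)^2(y-c_2)^2=y\,(y-c)^2(y-(1-c))^2,
\]
and since $c$ and $1-c$ are roots of a common quadratic, $(y-c)(y-(1-c))=y^2-y+c(1-c)=y^2-y+5/16$, so
\[
N(y)=y\Bigl(y^2-y+\tfrac{5}{16}\Bigr)^2
= y^5-2y^4+\Bigl(1+\tfrac{5}{8}\Bigr)y^3-\cdots,
\]
which should expand to $y^5-\tfrac52y^4+\tfrac{35}{16}y^3-\tfrac{25}{32}y^2+\tfrac{25}{256}y$; I would present this expansion and then invoke Theorem 4.5 (via \eqref{eq:4.26}) to conclude $I(y)=N(y)-N(x)=N(y)-m$ with $m=N(x)\in\F$.

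The main obstacle is purely computational: correctly identifying which quadratic $c$ and $1-c$ satisfy and tracking the $\sqrt5$ simplifications so that $c_5=0$ comes out exactly, and then expanding $y(y^2-y+5/16)^2$ without arithmetic slips to reproduce the stated coefficients. There is no conceptual difficulty—Theorems 4.4 and 4.5 do all the structural work—so the proof is a two-line invocation of those theorems preceded by the period computation, exactly as in Corollaries 4.6 and 4.7.
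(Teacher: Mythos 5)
Your overall strategy is exactly the paper's (compute the period of $S(c)$, invoke Theorems 4.4 and 4.5, expand $N(y)$ from \eqref{eq:4.25}), but the execution contains an arithmetic error that produces the wrong polynomial. The recurrence gives $c_2=4c(1-c)$, and with $c=(5+\sqrt5)/8$ one has $1-c=(3-\sqrt5)/8$, so $c_2=c_3=(5-\sqrt5)/8=:c'$, which is \emph{not} $1-c$. You conflate $c'$ with $1-c$ throughout: your identity $c(1-c)=5/16$ is really $cc'=5/16$ (in fact $c(1-c)=(5-\sqrt5)/32$), and your quadratic $y^2-y+\tfrac{5}{16}$ should be $(y-c)(y-c')=y^2-\tfrac54 y+\tfrac{5}{16}$ (you even write down the correct minimal polynomial $16y^2-20y+5$ of $c$ and $c'$ but then don't use it). Your own partial expansion already betrays the problem: $y\bigl(y^2-y+\tfrac5{16}\bigr)^2=y^5-2y^4+\cdots$ cannot equal the stated $y^5-\tfrac52y^4+\cdots$. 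The correct computation is $N(y)=y(y-c)^2(y-c')^2=y\bigl(y^2-\tfrac54y+\tfrac5{16}\bigr)^2$, which does yield $y^5-\tfrac52y^4+\tfrac{35}{16}y^3-\tfrac{25}{32}y^2+\tfrac{25}{256}y$, exactly as in the paper's proof.

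The verification that $c\in\C$ is also incomplete as written. Your suggested relation $\rho(c)\rho(1-c)=\rho(5)$ rests on the same mistaken identity, and even a correct product formula would only show the two characters agree, not that both equal $1$; note the paper's Remark that $(5+\sqrt5)/2$ is a nonsquare when $q\equiv\pm9\pmod{20}$, so $\rho(c)=1$ genuinely uses the hypothesis $q\equiv\pm1\pmod{20}$. The paper settles this by exhibiting $1-c=\bigl((\sqrt5-1)/4\bigr)^2$ as a manifest square and by writing $c=-(\zeta-\zeta^{-1})^2/4$ for a primitive tenth root of unity $\zeta\in\F[i]$ (which exists precisely because $10\mid 2E$ under the stated congruence), invoking the discussion following \eqref{eq:3.13}; your parenthetical appeal to \eqref{eq:3.15} is the right instinct but needs to be made into the actual argument rather than an aside.
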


\begin{proof}
For either choice of sign of $\sqrt{5} \in \F$,
the condition on $q$ guarantees the existence of a 
primitive tenth root of unity 
$\zeta \in \F[i]$ for which
\[
-(\zeta -\zeta^{-1})^2/4=(5+\sqrt{5})/8.
\]
Thus
$c$ is a square in $\F$.  Moreover, $1-c$ is a square in $\F$ since
$1-c = \Big( (\sqrt{5} -1)/4 \Big)^2$.
This proves that $c \in \C$.
Define  $c':=(5-\sqrt{5})/8$.   We have
$c_0=c_5=0, \ c_1=c_4 = c, \ c_2=c_3=c'$.
As $S(c)$ has period $e=5$, each irreducible factor
$I(y)=N(y) - N(x)$ has degree $d=5$.
The result now follows, as
$N(y) = y(y-c)^2(y-c')^2$.
\end{proof}

\begin{rem}
While the proof above shows that $(5+\sqrt{5})/2$ is a square in $\F$
when $q \equiv \pm 1 \pmod {20}$, it is also true that
$(5+\sqrt{5})/2$ is a non-square in $\F$  when $q \equiv \pm 9 \pmod {20}$.
To verify this, one can first reduce to the case when $q$ is prime,
and then apply the theorem in \cite[p. 257]{WHF}.
\end{rem}

\begin{cor}\label{Corollary 4.9}
Suppose that $q \equiv \pm 1 \pmod {12}$, and let
$s \in \{ \pm \sqrt{3}/2 \}$,
so that $c=1-s^2 = 1/4 \in \C$.  Then each irreducible factor $I(y)$ of $g_s(y)$
has the form
\[
y^6 -3y^5 +\frac{27}{8}y^4 -\frac{7}{4}y^3 + \frac{105}{256} y^2 - \frac{9}{256} y - m
\]
for some $m \in \F$.
\end{cor}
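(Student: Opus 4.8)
The plan is to follow the template of the proofs of Corollaries 4.6--4.8. First I would observe that the hypothesis $q \equiv \pm 1 \pmod{12}$ forces $\rho(3) = 1$; one can reduce this to the case of prime $q$ and invoke quadratic reciprocity, exactly as in the Remark following Corollary 4.8. Hence $\sqrt{3} \in \F$, so that $s = \pm\sqrt{3}/2$ is genuinely an element of $\F$, and moreover $c := 1 - s^2 = 1/4$ satisfies $\rho(c) = \rho(1/4) = 1$ and $\rho(1-c) = \rho(3/4) = \rho(3) = 1$, i.e.\ $c \in \C$ and \eqref{eq:3.1} holds with $c_1 = c = 1/4$. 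Thus Theorems 4.4 and 4.5 apply.

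Next I would compute the sequence $S(c) = \langle c_k\rangle$. For $c = 1/4$ the recurrence \eqref{eq:3.2} becomes $c_{k+1} = c_k - c_{k-1} + 1/2$, with $c_0 = 0$ and $c_1 = 1/4$; iterating gives
\[
c_0 = 0, \quad c_1 = \frac14, \quad c_2 = \frac34, \quad c_3 = 1, \quad c_4 = \frac34, \quad c_5 = \frac14, \quad c_6 = 0.
\]
Since $c_k \neq 0$ for $1 \le k \le 5$ while $c_6 = 0$, equation \eqref{eq:3.9} shows that $S(c)$ has period $e = 6$ (and one may cross-check $c_3 = 1 = c_{e/2}$ against \eqref{eq:3.12}). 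As $e = 6 > 2$, the discussion preceding \eqref{eq:4.10} together with Theorem 4.4 shows that every zero $x$ of $g_s(y)$ has degree $d = e = 6$ over $\F$.

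Finally, since $d = 6$ is even, the definition \eqref{eq:4.25} of $N(y)$ reads
\[
N(y) = (y^2 - y)(y - c_1)^2 (y - c_2)^2 = (y^2 - y)\Bigl(y - \frac14\Bigr)^2\Bigl(y - \frac34\Bigr)^2 = (y^2 - y)\Bigl(y^2 - y + \frac{3}{16}\Bigr)^2 .
\]
Setting $u = y^2 - y$ and expanding $u\bigl(u + \frac{3}{16}\bigr)^2 = u^3 + \frac38 u^2 + \frac{9}{256} u$ yields
\[
N(y) = y^6 - 3y^5 + \frac{27}{8} y^4 - \frac{7}{4} y^3 + \frac{105}{256} y^2 - \frac{9}{256} y .
\]
By \eqref{eq:4.26} and Theorem 4.5, for any zero $x$ of $g_s(y)$ the monic irreducible polynomial of $x$ over $\F$ is $I_x(y) = N(y) - N(x)$; since every monic irreducible factor of $g_s(y)$ arises in this way, each has the asserted form with $m = N(x) \in \F$.

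The corollary is entirely routine given the machinery already in place: the two substantive inputs --- that the common degree of the zeros of $g_s(y)$ equals the period $e$ of $S(c)$, and that the irreducible factors are the polynomials $N(y) - N(x)$ --- are Theorems 4.4 and 4.5. The only steps requiring any care are the quadratic-residuacity check that $c = 1/4 \in \C$ (equivalently $\rho(3) = 1$) under $q \equiv \pm 1 \pmod{12}$, and confirming that the period is exactly $6$ rather than a proper divisor, which is immediate from \eqref{eq:3.9} once one observes that $c_1, \dots, c_5$ are all nonzero. I do not anticipate any genuine obstacle.
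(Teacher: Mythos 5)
Your proposal is correct and follows essentially the same route as the paper's proof: compute $S(1/4)$ to find period $e=6$, invoke Theorems 4.4 and 4.5, and expand $N(y)=(y^2-y)(y-1/4)^2(y-3/4)^2$. The extra verification that $q\equiv\pm1\pmod{12}$ gives $\rho(3)=1$ (hence $c=1/4\in\C$) is a welcome detail the paper leaves implicit, and your expansion of $N(y)$ matches the stated coefficients.
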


\begin{proof}
We have
$c_0=c_6=0, \ c_1=c_5=1/4, \ c_2 =c_4=3/4, \ c_3 =1$.
Thus $S(c)$ has period $e=6$, so each irreducible factor
$I(y)$ has degree $d=6$.
The result now follows, as
$N(y) = (y^2-y)(y-1/4)^2(y-3/4)^2$.
\end{proof}

\begin{cor}\label{Corollary 4.10}
Suppose that $q \equiv \pm 1 \pmod {16}$,
so that $\rho(2)=1$, and set
$c=(2 + \sqrt{2})/4$ 
for either choice of the square root.
Then $c \in \C$
and each irreducible factor $I(y)$ of $g_s(y)$
has the form
\[
y^8-4y^7+
\frac{13}{2}y^6-\frac{11}{2}y^5+\frac{165}{64}y^4-\frac{21}{32}y^3
+\frac{21}{256}y^2-\frac{1}{256}y -m
\]
for some $m \in \F$.
\end{cor}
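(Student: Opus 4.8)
The plan is to follow the template of the proofs of Corollaries 4.6--4.9: show that $c=(2+\sqrt2)/4$ lies in $\C$ and that the sequence $S(c)$ has period $e=8$, then read $N(y)$ off from \eqref{eq:4.25} and invoke Theorems 4.4 and 4.5.

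First I would record the arithmetic facts that follow from $q\equiv\pm1\pmod{16}$. Since $q\equiv\pm1\pmod 8$ we have $\rho(2)=1$, so $\sqrt2\in\F$ and $c=(2+\sqrt2)/4$ makes sense in $\F$. Since $16\mid q-1$ or $16\mid q+1$, the order of $\F[i]^*$ is divisible by $16$ (it equals $q-1$ when $q\equiv1\pmod4$, in which case $q\equiv1\pmod{16}$, and $q^2-1$ when $q\equiv3\pmod4$). Finally $8$ divides $E=(q-\rho(-1))/2$: when $q\equiv1\pmod{16}$, $\rho(-1)=1$ and $E=(q-1)/2$; when $q\equiv-1\pmod{16}$, $\rho(-1)=-1$ and $E=(q+1)/2$; in both cases $8\mid E$. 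Next, to settle $c\in\C$ and the period of $S(c)$ in one stroke, I would exhibit a primitive $16$th root of unity $\zeta\in\F[i]$ with $-(\zeta-\zeta^{-1})^2/4=c$, in the spirit of the primitive tenth root used in Corollary 4.8. Writing $\eta=\zeta^2$, the identity amounts to $\eta+\eta^{-1}=-\sqrt2$: the primitive $8$th roots of unity lie in $\F[i]$, each satisfies $(\eta+\eta^{-1})^2=2$ (since $\eta^4=-1$ forces $\eta^2+\eta^{-2}=0$), and as $\eta$ runs over them $\eta+\eta^{-1}$ takes both values $\pm\sqrt2$ (because $\eta^5=-\eta$ reverses the sign), so a suitable $\eta$ exists for either choice of $\sqrt2$. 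Since $16$ divides $|\F[i]^*|$, such an $\eta$ is the square of an element $\zeta$ of order $16$ in $\F[i]$. Then $\zeta$ has order $2e$ with $e=8\mid E$, so by the remarks following \eqref{eq:3.13} the sequence $S(c)$ has period $8$ and \eqref{eq:3.11} holds; in particular $c=c_1\in\C$, since $c_1\notin\{0,1\}$ (here $e=8>2$).

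With the period known I would compute the first terms of $S(c)$ from \eqref{eq:3.2}, using $2-4c=-\sqrt2$: this gives $c_1=(2+\sqrt2)/4$, $c_2=4c_1(1-c_1)=1/2$, $c_3=(2-\sqrt2)/4$, then $c_4=1$, $c_5=c_3$, $c_6=c_2$, $c_7=c_1$, $c_8=0$, confirming $e=8$. Since $c_1+c_3=1$ and $c_1c_3=1/8$, formula \eqref{eq:4.25} for even $d=8$ yields
\[
N(y)=(y^2-y)(y-c_2)^2(y-c_1)^2(y-c_3)^2
=(y^2-y)\bigl(y^2-y+\tfrac14\bigr)\bigl(y^2-y+\tfrac18\bigr)^2 .
\]
Putting $t=y^2-y$ turns this into $N=t^4+\tfrac12t^3+\tfrac5{64}t^2+\tfrac1{256}t$, and expanding via $t^j=y^j(y-1)^j$ produces the explicit degree-$8$ polynomial displayed in the statement. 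Finally, by Theorem 4.4 every zero $x$ of $g_s(y)$ has degree $d=e=8$ over $\F$, and by Theorem 4.5 together with \eqref{eq:4.26} each monic irreducible factor of $g_s(y)$ equals $N(y)-N(x)$; setting $m=N(x)\in\F$ gives the claimed form.

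The only step that needs genuine care is the construction of $\zeta$: one must check that both square roots of $2$ in $\F$ arise as $-(\eta+\eta^{-1})$ for primitive $8$th roots $\eta\in\F[i]$, and that such an $\eta$ lifts to an element of order $16$. This is the analogue of the corresponding step in Corollary 4.8, and it is exactly where the hypothesis $q\equiv\pm1\pmod{16}$ (rather than merely $q\equiv\pm1\pmod 8$) is essential. Everything else — the recurrence computation and the expansion of $N(y)$ — is routine bookkeeping.
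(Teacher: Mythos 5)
Your proposal is correct and follows essentially the same route as the paper: exhibit a primitive sixteenth root of unity $\zeta\in\F[i]$ with $-(\zeta-\zeta^{-1})^2/4=c$ to get $c\in\C$ and period $e=8$, compute $c_1,\dots,c_7$, and read off $N(y)=(y^2-y)(y-c)^2(y-1/2)^2(y-c')^2$ via Theorems 4.4 and 4.5. The only difference is that you spell out the construction of $\zeta$ (via a primitive eighth root $\eta$ with $\eta+\eta^{-1}=-\sqrt2$ and its square root of order $16$), which the paper simply asserts is guaranteed by $q\equiv\pm1\pmod{16}$.
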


\begin{proof}
For either choice of sign of $\sqrt{2} \in \F$,
the condition on $q$ guarantees the existence of a 
primitive sixteenth root of unity $\zeta \in \F[i]$ for which
\[
-(\zeta -\zeta^{-1})^2/4=(2+\sqrt{2})/4.
\]
Thus
$c$ and $c':=1-c$ are both squares in $\F$.  
This proves that $c \in \C$.
We have
\[
c_0=c_8=0, \ c_1 = c_7=c, \ c_2=c_6=1/2, \ c_3= c_5=c', \ c_4 =1.
\]
As $S(c)$ has period $e=8$, each irreducible factor
$I(y)$ has degree $d=8$.
The result now follows, as
$N(y) = (y^2-y)(y-c)^2(y-1/2)^2(y-c')^2$.
\end{proof}

\begin{rem}
While the proof above shows that $(2+\sqrt{2})$ is a square in $\F$
when $q \equiv \pm 1 \pmod {16}$, it is also true that
$(2+\sqrt{2})$ is a non-square in $\F$  when $q \equiv \pm 7 \pmod {16}$.
To verify this, again one can first reduce to the case when $q$ is prime,
and then apply the theorem in \cite[p. 257]{WHF}.
\end{rem}

\begin{cor}\label{Corollary 4.11}
Suppose that $q \equiv \pm 1 \pmod {20}$,
so that $\rho(5)=1$. Set
$c=(3 - \sqrt{5})/8$ for either choice of the square root.   Then $c \in \C$,
and each irreducible factor $I(y)$ of $g_s(y)$
has the form
\begin{equation*}
\begin{split}
&y^{10} - 5y^9 +\frac{85}{8}y^8-\frac{25}{2}y^7+\frac{2275}{256}y^6
-\frac{1001}{256}y^5 +\\
&+ \frac{2145}{2048}y^4 - \frac{165}{1024} y^3 + \frac{825}{65536} y^2
 - \frac{25}{65536}y - m
\end{split}
\end{equation*}
for some $m \in \F$.
\end{cor}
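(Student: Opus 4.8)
The plan is to follow the pattern of Corollaries 4.8 and 4.10. The first task is to verify that $c = (3-\sqrt{5})/8$ lies in $\C$ and that the sequence $S(c)$ has period $e = 10$. Since $q \equiv \pm 1 \pmod{20}$, the cyclic group $\F[i]^*$ (of order $q$ or $q^2$ according as $\rho(-1) = 1$ or $\rho(-1) = -1$) has order divisible by $20$ and so contains an element of order $20$. For any such $\zeta$,
\[
-\frac{(\zeta - \zeta^{-1})^2}{4} = \frac{2 - \zeta^2 - \zeta^{-2}}{4},
\]
and as $\zeta$ ranges over the elements of order $20$, $\zeta^2$ ranges over the primitive tenth roots of unity, for which $\zeta^2 + \zeta^{-2} \in \{(1-\sqrt{5})/2,\ (1+\sqrt{5})/2\}$; these two values are $-1$ times the quadratic Gaussian periods of the fifth roots of unity, and they lie in $\F$ because $\rho(5) = 1$. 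Hence $-(\zeta - \zeta^{-1})^2/4$ takes the two values $(3 \pm \sqrt{5})/8$, so for whichever square root $\sqrt{5} \in \F$ has been fixed one may choose $\zeta \in \F[i]$ of order $20$ with $-(\zeta - \zeta^{-1})^2/4 = c$. As $10 \mid E$ is immediate from $q \equiv \pm 1 \pmod{20}$, the observation following \eqref{eq:3.13} shows that $S(c)$ has period $e = 10$, and \eqref{eq:3.11} with $k = 1$ gives $c \in \C$.

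Next I would record the sequence explicitly. Writing $\varphi := 2 - 4c = (1+\sqrt{5})/2$, iteration of the recurrence \eqref{eq:3.2} yields
\[
c_0 = 0, \quad c_1 = c_9 = \frac{3-\sqrt{5}}{8}, \quad c_2 = c_8 = \frac{5-\sqrt{5}}{8}, \quad c_3 = c_7 = \frac{3+\sqrt{5}}{8}, \quad c_4 = c_6 = \frac{5+\sqrt{5}}{8}, \quad c_5 = 1.
\]
Since $\rho(5) = 1$ forces $p \notin \{2, 5\}$, the elements $c_1, \dots, c_4$ are nonzero; together with $c_5 = 1$ and $c_{10} = c_0 = 0$, relations \eqref{eq:3.9} and \eqref{eq:3.12} again pin down $e = 10$.

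By Theorem 4.4, every zero $x$ of $g_s(y)$ has degree $d = e = 10 > 2$, so by Theorem 4.5 each monic irreducible factor of $g_s(y)$ equals $I_x(y) = N(y) - N(x)$. Since $d$ is even, \eqref{eq:4.25} gives
\[
N(y) = (y^2 - y)\prod_{k=1}^{4}(y - c_k)^2 = (y^2 - y)\Big(y^2 - \tfrac{3}{4}\,y + \tfrac{1}{16}\Big)^{\!2}\Big(y^2 - \tfrac{5}{4}\,y + \tfrac{5}{16}\Big)^{\!2},
\]
where I have grouped the conjugate pairs $\{c_1, c_3\}$ (sum $3/4$, product $1/16$) and $\{c_2, c_4\}$ (sum $5/4$, product $5/16$). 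Expanding the right-hand side and collecting coefficients reproduces exactly the degree-$10$ polynomial displayed in the statement, so $I(y)$ has the asserted form with $m := N(x)$. The only real obstacle is the bookkeeping in this last expansion: it is purely mechanical (and, as for the analogous corollaries, conveniently handled by a computer algebra system), though one should still double-check the six iterations of \eqref{eq:3.2} that produce the table of $c_k$.
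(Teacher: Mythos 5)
Your proposal is correct and follows essentially the same route as the paper: the paper likewise establishes $c\in\C$ via roots of unity (citing the proof of Corollary 4.8), lists $c_0,\dots,c_5$ with the same values ($c_2=c+1/4=(5-\sqrt5)/8$, $c_3=c'=(3+\sqrt5)/8$, $c_4=c'+1/4=(5+\sqrt5)/8$, $c_5=1$), concludes $e=d=10$, and expands $N(y)=(y^2-y)(y-c)^2(y-c-1/4)^2(y-c')^2(y-c'-1/4)^2$, which agrees with your pairing into the rational quadratics $y^2-\tfrac34 y+\tfrac1{16}$ and $y^2-\tfrac54 y+\tfrac5{16}$ and yields exactly the displayed coefficients. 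The only blemish is the parenthetical claim that $\F[i]^*$ has order $q$ or $q^2$ (it has order $q-1$ or $q^2-1$), which does not affect the argument.
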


\begin{proof}
The proof of Corollary 4.8 shows that $c \in \C$.
Define  $c':=(3+\sqrt{5})/8$.   We have 
\[
c_0=0, \ c_1 = c, \ c_2=c +1/4, \ c_3= c', \ c_4 =c'+1/4, \ c_5=1,
\]
and $e=d=10$.
The result now follows, as
\[
N(y) = (y^2-y)(y-c)^2(y-c-1/4)^2(y-c')^2(y-c'-1/4)^2.
\]
\end{proof}

\begin{cor}\label{Corollary 4.12}
Suppose that $q \equiv \pm 1 \pmod {24}$,
so that $\rho(2)=\rho(3)=1$. Set
$c=(2 + \sqrt{3})/4$ for either choice of the square root.   Then $c \in \C$,
and each irreducible factor $I(y)$ of $g_s(y)$
has the form
\begin{equation*}
\begin{split}
&y^{12} - 6y^{11}+ \frac{63}{4}y^{10} - \frac{95}{4}y^9 +\frac{2907}{128}y^8
-\frac{459}{32}y^7+
\frac{1547}{256}y^6\\ 
&-\frac{429}{256}y^5 +
 \frac{19305}{65536}y^4 - \frac{1001}{32768} y^3 + \frac{429}{262144} y^2
 - \frac{9}{262144}y - m
\end{split}
\end{equation*}
for some $m \in \F$.
\end{cor}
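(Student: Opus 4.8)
The plan is to follow the pattern of Corollaries~4.6--4.11: first verify that $c=(2+\sqrt3)/4$ lies in $\C$, so that the results of Section~4 apply; then compute enough terms of the recurrence $S(c)$ of \eqref{eq:3.2} to determine its period $e$; and finally expand the polynomial $N(y)$ of \eqref{eq:4.25}. The corollary then drops out of Theorems~4.4 and~4.5.

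For the first step, since $q\equiv\pm1\pmod{24}$ we have $q\equiv\pm1\pmod8$ and $q\equiv\pm1\pmod{12}$, so $\rho(2)=\rho(3)=1$; in particular $\sqrt2,\sqrt3\in\F$ and $\rho(8)=\rho(2)^3=1$. One can then simply observe that
\[
c=\frac{(\sqrt3+1)^2}{8}=\Big(\frac{\sqrt3+1}{2\sqrt2}\Big)^2,\qquad
1-c=\frac{2-\sqrt3}{4}=\frac{(\sqrt3-1)^2}{8}=\Big(\frac{\sqrt3-1}{2\sqrt2}\Big)^2
\]
are squares in $\F$, so $c\in\C$. (Alternatively, as in the proof of Corollary~4.10, the hypothesis on $q$ furnishes a primitive twenty-fourth root of unity $\zeta\in\F[i]$ with $-(\zeta-\zeta^{-1})^2/4=c$; the same reasoning as in \eqref{eq:3.11} then shows that $c$ and $1-c=(\zeta+\zeta^{-1})^2/4$ are squares in $\F$.)

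For the second step, run \eqref{eq:3.2} with $4c=2+\sqrt3$, so that $2-4c=-\sqrt3$ and $2c=(2+\sqrt3)/2$. A short computation gives
\[
c_0=0,\quad c_1=\tfrac{2+\sqrt3}{4},\quad c_2=\tfrac14,\quad c_3=\tfrac12,\quad c_4=\tfrac34,\quad c_5=\tfrac{2-\sqrt3}{4},\quad c_6=1,
\]
and then the symmetry $c_{e-k}=c_k$ of \eqref{eq:3.7} (or continued iteration) gives $c_7=c_5,\ c_8=c_4,\ c_9=c_3,\ c_{10}=c_2,\ c_{11}=c_1,\ c_{12}=0$. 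Since $c_k\neq0$ for $0<k<12$ while $c_{12}=0$, \eqref{eq:3.9} forces $e=12$, and hence by Theorem~4.4 every zero $x$ of $g_s(y)$ has degree $d=12$. As $d$ is even, \eqref{eq:4.25} yields
\[
N(y)=(y^2-y)\prod_{k=1}^{5}(y-c_k)^2=(y^2-y)\Big(y^2-y+\tfrac1{16}\Big)^2\Big(y-\tfrac14\Big)^2\Big(y-\tfrac12\Big)^2\Big(y-\tfrac34\Big)^2,
\]
where I used $c_1+c_5=1$ and $c_1c_5=\tfrac1{16}$ to merge $(y-c_1)^2(y-c_5)^2$ into $(y^2-y+\tfrac1{16})^2$. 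By Theorem~4.5, every monic irreducible factor of $g_s(y)$ equals $I_x(y)=N(y)-N(x)$ for some zero $x$, hence has the asserted shape with $m=N(x)\in\F$.

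The remaining step is the purely mechanical expansion of the degree-$12$ product $N(y)$ into the coefficient list displayed in the statement; this is cleanest after the substitution $t=y^2-y$, under which $N=t\,(t+\tfrac1{16})^2(t+\tfrac3{16})^2(t+\tfrac14)$, and a check of the top coefficients ($y^{12}$ monic, $-6y^{11}$, $\tfrac{63}{4}y^{10}$) confirms the normalization. I anticipate no conceptual obstacle: the only points needing care are the verification that $c\in\C$ and the tracking of powers of $2$ in the denominators during the expansion, which is best handed to a computer algebra system.
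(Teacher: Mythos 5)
Your proof is correct and follows essentially the same route as the paper: verify $c\in\C$ via an explicit square-root identity (the paper writes $c=(\sqrt6+\sqrt2)^2/16$, you write $c=(\sqrt3+1)^2/8$, which is the same fact given $\rho(2)=\rho(3)=1$), iterate the recurrence to get $c_0,\dots,c_6$ and hence $e=d=12$, and read off $I(y)=N(y)-N(x)$ from Theorems~4.4 and~4.5 with $N(y)=(y^2-y)\prod_{k=1}^{5}(y-c_k)^2$. The only cosmetic difference is your grouping $N=t(t+\tfrac1{16})^2(t+\tfrac3{16})^2(t+\tfrac14)$ with $t=y^2-y$ to organize the expansion (and note that citing $c_{e-k}=c_k$ before $e$ is determined is slightly circular, but your parenthetical ``or continued iteration'' fixes that).
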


\begin{proof}
We know $c$ is a square in $\F$ because
$c = (\sqrt{6} + \sqrt{2})^2 /16$.
Similarly,
$1-c=c':=(2  -\sqrt{3})/4$ is a square in $\F$.   
Thus $c \in \C$. We have
\[
c_0=0, \ c_1 = c, \ c_2=1/4, \ c_3= 1/2, \ c_4 =3/4, \ c_5=c', \ c_6=1.
\]
and $e=d=12$.
The result now follows, as
\[
N(y) = (y^2-y)(y-c)^2(y-1/4)^2(y-1/2)^2(y-3/4)^2 (y-c')^2.
\]
\end{proof}

We next give a necessary and sufficient condition on $s$ for the
irreducibility of $g_s(y)$. 
Recall from \eqref{eq:3.13} that $E$ denotes the even integer
$(q-\rho(-1))/2$.

\begin{cor}\label{Corollary 4.13}
The polynomial $g_s(y)$ is irreducible over $\F$ if and only if
$s = (B + B^{-1})/2$ for some element $B  \in \F[i]$ of order $2E$.
Moreover, for such $s$,  $g_s(y)$ is the irreducible polynomial in $\F[y]$
given by 
\begin{equation}\label{eq:4.29}
g_s(y) = \frac{\tau^2}{2} (y^2-y)\prod\limits_{a \in \C} (y-a)^2 + 1-s.
\end{equation}
\end{cor}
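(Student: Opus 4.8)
The plan is to merge the classification of irreducible factors from Sections~2 and~4 with the description of the period $e$ of $S(c)$ in terms of $\beta$, and then to read off \eqref{eq:4.29} by evaluating at a single point. Throughout I take $E>2$, the degenerate case $E=2$ (i.e.\ $q\in\{3,5\}$) being treated separately. First recall that $\deg g_s(y)=E$, and that if $s\in\{0,\pm1\}$ or $\rho(1-s^2)=-1$, then \eqref{eq:2.1}, \eqref{eq:2.4}, Theorem~2.1, Theorem~2.3 and Theorem~2.4 exhibit $g_s(y)$ as a product of at least two factors of degree $\le 2$, hence $g_s(y)$ is reducible. Therefore, if $g_s(y)$ is irreducible then $1-s^2=c=c_1\in\C$, and then by Theorems~4.4 and~4.5 every monic irreducible factor of $g_s(y)$ has degree $e$, the period of $S(c)$; since $\deg g_s(y)=E$, irreducibility is equivalent to $e=E$. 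So it suffices to prove the equivalence
\[
c\in\C\ \text{ and }\ e=E \iff s=(B+B^{-1})/2\ \text{ for some }\ B\in\F[i]\ \text{ with }\ \ord(B)=2E.
\]

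For ``$\Rightarrow$'', take $\beta=\sqrt{1-c}+i\sqrt{c}\in\F[i]$ as in \eqref{eq:3.4}, normalized to have even order. By \eqref{eq:3.8}, $\ord(\beta)=2e=2E$, and by \eqref{eq:3.10}, $s=\pm(\beta+\beta^{-1})/2$. If the sign is $+$, put $B=\beta$; if it is $-$, put $B=-\beta$, so that again $(B+B^{-1})/2=-(\beta+\beta^{-1})/2=s$. The only delicate point is that $\ord(-\beta)=2E$ and not $E$: since $(-\beta)^2=\beta^2$ has order $e$, one has $\ord(-\beta)\in\{e,2e\}$; moreover the unique involution of the cyclic group $\langle\beta\rangle$ of even order is $-1$, so $\beta^{e}=-1$ and hence $(-\beta)^{e}=(-1)^{e+1}=-1\ne 1$ (here we use that $e=E$ is even by \eqref{eq:3.13}), which rules out $\ord(-\beta)=e$. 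For ``$\Leftarrow$'', write $c=1-s^2=-(B-B^{-1})^2/4$; then the discussion just after \eqref{eq:3.13}, applied with $\zeta=B$ (of order $2E$, and $E\mid E$), shows that $S(c)$ has period $E$ and that \eqref{eq:3.11} holds. Since $\ord(B)=2E>4$, neither $c=0$ (which would force $B\in\{\pm1\}$) nor $c=1$ (which would force $\ord(B)=4$) can occur, so $c=c_1\notin\{0,1\}$ and \eqref{eq:3.11} gives $c\in\C$; combined with the period being $E$, this is exactly what was wanted.

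It remains to derive \eqref{eq:4.29} when $g_s(y)$ is irreducible. From \eqref{eq:2.1} together with $g_s(y)=g_1(y)+(1-s)$, the leading coefficient of $g_s(y)$ is $\tau^2/2$, so by Theorem~4.5, $g_s(y)=\tfrac{\tau^2}{2}I_x(y)=\tfrac{\tau^2}{2}\bigl(N(y)-N(x)\bigr)$ for any zero $x$. Because $d=e=E$ is even, \eqref{eq:4.25} gives $N(y)=(y^2-y)\prod_{k=1}^{E/2-1}(y-c_k)^2$; the distinctness claim of Section~3 shows $c_1,\dots,c_{E/2-1}$ are distinct and different from $0$ and $1$, so they lie in $\C$ by \eqref{eq:3.11}, and as $|\C|=E/2-1$ they exhaust $\C$. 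Hence $N(y)=(y^2-y)\prod_{a\in\C}(y-a)^2$. Finally, evaluating $g_s(y)=\tfrac{\tau^2}{2}N(y)-\tfrac{\tau^2}{2}N(x)$ at $y=0$ and using $g_s(0)=1-s$ and $N(0)=0$ gives $-\tfrac{\tau^2}{2}N(x)=1-s$, so
\[
g_s(y)=\frac{\tau^2}{2}(y^2-y)\prod_{a\in\C}(y-a)^2+(1-s),
\]
which is \eqref{eq:4.29}.

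I expect the principal difficulty to be the ``$\Rightarrow$'' half of the displayed equivalence: one must decide which of $\pm\beta$ has order exactly $2E$ and verify it, and one must carefully justify that ``all irreducible factors have degree $\le 2$'' really yields reducibility, which is the step where the small cases $E=2$ must be inspected separately. The remaining ingredients — the structure of $S(c)$, the count $|\C|=E/2-1$ obtained from \eqref{eq:2.1}, and Theorems~4.4 and~4.5 — are already in hand.
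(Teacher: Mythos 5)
Your argument is correct and follows essentially the same route as the paper: reduce irreducibility to the condition $e=E$ on the period of $S(c)$ via the Section~2 classification together with Theorems~4.4--4.5, translate that into the existence of $B\in\F[i]$ of order $2E$ with $s=\pm(B+B^{-1})/2$ (fixing the sign by passing to $-B$), and obtain \eqref{eq:4.29} from the leading coefficient $\tau^2/2$, the constant term $g_s(0)=1-s$, and the identification $N(y)=(y^2-y)\prod_{a\in\C}(y-a)^2$ coming from \eqref{eq:3.15}. The only loose end is the deferred case $E=2$ (i.e.\ $q\in\{3,5\}$), where the claim reduces to $s=0$ and the irreducibility of the single quadratic factor in Theorem~2.4 --- a check the paper itself omits.
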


\begin{proof}
Since $g_s(y)$ has degree $E$, $g_s(y)$ is irreducible if and only if
the sequence $S(c)$ has period $E$, i.e., if and only if
\[
1-s^2 = c = -(B-B^{-1})^2/4
\]
for some element $B  \in \F[i]$ of order $2E$.
Thus $g_s(y)$ is irreducible if and only if
$s = (B + B^{-1})/2$ for some element $B  \in \F[i]$ of order $2E$.
(Note that if $B$ has order $2E$, then so does $-B$.)
Since $g_s(y)$ has constant term $1-s$,
Theorem 4.5 yields
\[
g_s(y) = \frac{\tau^2}{2} N(y) + (1-s),
\]
so  \eqref{eq:4.29} follows from \eqref{eq:3.15}.
\end{proof}

We remark that the elements $a$ in \eqref{eq:4.29}
can be rapidly calculated, in view of \eqref{eq:3.14}--\eqref{eq:3.15}.
To illustrate Corollary 4.13, 
first consider the case $q=17$ with the choice of primitive root
$B=3$ and the choice  $s = (B+B^{-1})/2 = 13$.   We have in this case
$c=2$,  $\tau = 1$, $d=E = 8$, and
$\C = \{ 2,9,16 \}$.  Thus $2g_s(y)$ equals the irreducible monic polynomial
in $\F[y]$ given by
\[
(y^2-y)(y-2)^2(y-9)^2(y-16)^2 -7.
\]
Next consider the case $q=19$ with the choice $B=4+2i \in \F[i]$ of order $q+1 =20$
and the choice $s = (B+B^{-1})/2 = 4$.  We have in this case
$c=4$, $\tau=2$, $d=E=10$, and $\C=\{ 4, 9, 11, 16 \}$.
Thus $g_s(y)/2$ equals the monic irreducible polynomial
in $\F[y]$ given by
\[
(y^2-y)(y-4)^2(y-9)^2(y-11)^2(y-16)^2 -11.
\]

\section{Norms of the zeros of $g_s(y)$}
Choose an arbitrary element $\zeta \in \F[i]$ of order $2d$,
where $d>2$ divides $E=(q-\rho(-1))/2$.  Define the set
\[
\B_d:=\{(\zeta^j +\zeta^{-j})/2: 0 < j < d, (j,2d)=1\},
\]
and let $\B'_d$ be the set of negatives of the elements in $\B_d$.
If $d$ is even, then $\B_d=\B'_d$, since $\zeta^j +\zeta^{-j}=$
$-(\zeta^{d-j} +\zeta^{j-d})$ and $(d-j,2d)=1$.
However, if $d$ is odd, then $\B_d$ and $\B'_d$ are disjoint.
To see this, suppose otherwise, so that 
$\zeta^k + \zeta^j =-\zeta^{-j-k} (\zeta^k + \zeta^j)$
for some $j, k$ with
$0 < j < k <d$ and $(jk,2d)=1$.
Then either $\zeta^{k-j}$ or $\zeta^{-k-j}$ equals $-1$,
which is impossible since $k-j$ and $-k-j$ are both even.

As $\beta$ runs through the elements of $\F[i]$
of order $2d$, $(\beta + \beta^{-1})/2$ runs twice through the elements of
$\B_d$, and if moreover $d$ is odd,
$-(\beta + \beta^{-1})/2$
runs twice through the elements of $\B'_d$.

In view of \eqref{eq:3.10}, the elements $s \in \B_d \cup \B'_d$
are precisely those $s$ for which $1-s^2=c \in \C$ and the irreducible
factors of $g_s(y)$ and $g_{-s}(y)$ have degree $d$.  
Define the polynomial $G_s(y) \in \F[y]$ of degree $2E$ by
\[
G_s(y):= 2g_s(y)g_{-s}(y).
\]
When $s \in \B_d \cup \B'_d$, 
the irreducible factors $I_x(y):=N(y) - N(x)$ of $G_s(y)$ are polynomials
in $\F[y]$ of degree $d$ 
that are identical except for their constant terms
$-N(x)$.
This raises the following natural question:
How can the 
constant terms corresponding to $g_s(y)$
be distinguished from those corresponding to $g_{-s}(y)$?
For odd $d$, the answer is given by the following residuacity criterion for the
norms $N(x)$ of the zeros $x$ of $G_s(y)$.
\begin{thm}\label{Theorem 5.1}
If  $s \in \B$,
then the norms of the zeros of $g_s(y)$ are nonsquares in $\F$,
and when $d$ is odd, the norms of the 
zeros of $g_{-s}(y)$ are squares in $\F$.
\end{thm}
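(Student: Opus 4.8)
The plan is to reduce the norm statement to quadratic residuacity in $\mathbb{F}_{q^d}$ and then resolve that by attaching a root of unity to each zero. Fix a zero $x$ of $g_s(y)$; by Theorem 4.4 it has degree $d>2$ over $\F$, and I write $\delta:=(-1)^{(q^d-1)/2}\in\{\pm1\}$ (so $\delta=1$ exactly when $i\in\mathbb{F}_{q^d}$). The norm of $x$ over $\F$, namely $\prod_{k=0}^{d-1}x^{q^k}=x^{(q^d-1)/(q-1)}$, raised to the power $(q-1)/2$ equals $x^{(q^d-1)/2}$; hence the norm of $x$ is a square in $\F$ if and only if $x^{(q^d-1)/2}=1$, i.e.\ if and only if $x$ is a square in $\mathbb{F}_{q^d}$. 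So it suffices to compute the sign $x^{(q^d-1)/2}$.

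Next I would parametrize. Pick $\mu\in\overline{\F}$ with $\mu^2=1-2x+2i\sqrt{x-x^2}$, where $\sqrt{x-x^2}:=(x^n-sx)/\sqrt{c_1}\in\mathbb{F}_{q^d}$ is the square root fixed in \eqref{eq:4.7} (note $\sqrt{c_1}\in\F$ since $\rho(c_1)=1$). From $(1-2x)^2+4(x-x^2)=1$ one gets $\mu^{-2}=1-2x-2i\sqrt{x-x^2}$, whence $\mu^2+\mu^{-2}=2-4x$, $(\mu-\mu^{-1})^2=-4x$, and $(\mu+\mu^{-1})^2=4(1-x)$; thus $-i(\mu-\mu^{-1})/2$ and $(\mu+\mu^{-1})/2$ are square roots of $x$ and $1-x$. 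Using $x^n=(\sqrt{x})^{q+1}$, $(1-x)^n=(\sqrt{1-x})^{q+1}$, $2^{q+1}=4$, $(-i)^{q+1}=(-1)^n=-\rho(-1)$, and $q-\rho(-1)=2E$, a short expansion in which half the terms cancel yields the key identity
\[
s=x^n+(1-x)^n=\tfrac12\bigl(\mu^{2E}+\mu^{-2E}\bigr),
\]
so $\mu^{2E}$ is a root of $Y^2-2sY+1$. Moreover, applying $z\mapsto z^{q^d}$ to $\mu^2$ (which fixes $x,\sqrt{x-x^2}\in\mathbb{F}_{q^d}$ and sends $i$ to $\delta i$) gives $(\mu^2)^{q^d}=(\mu^2)^{\delta}$, so $\ord(\mu^2)\mid q^d-\delta$; and from $x=-(\mu-\mu^{-1})^2/4$ and $4^{(q^d-1)/2}=1$ one gets $x^{(q^d-1)/2}=\delta\,(\mu-\mu^{-1})^{q^d-1}$.

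Now suppose $s\in\B$. By the definition of $\B_d$, $s=\tfrac12(\beta_0+\beta_0^{-1})$ for some $\beta_0$ of order $2d$, so $\mu^{2E}\in\{\beta_0,\beta_0^{-1}\}$ has order $2d$; hence $\mu^{2Ed}=-1$ and $\mu^{4Ed}=1$, which (the $2$-part of $\ord\mu$ cannot be smaller, else $\mu^{2Ed}=1$) forces $v_2(\ord\mu)=v_2(4Ed)$. A lifting-the-exponent computation gives $v_2(q^d-\delta)=v_2(4Ed)-1$: write $q^{2d}-1=(q^d-1)(q^d+1)$ with exactly one factor divisible by $4$, use $q^{2d}-1=(q^2)^d-1$ with $q^2\equiv1\pmod 8$, and $q^2-1=2E\,(q+\rho(-1))$ with $v_2(q+\rho(-1))=1$. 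Therefore $\ord\mu\nmid q^d-\delta$, while $\mu^{2(q^d-\delta)}=1$ (from $\ord(\mu^2)\mid q^d-\delta$), so $\mu^{q^d-\delta}=-1$, i.e.\ $\mu^{q^d}=-\mu^{\delta}$. Then $(\mu-\mu^{-1})^{q^d}=\mu^{q^d}-\mu^{-q^d}=-(\mu^{\delta}-\mu^{-\delta})=-\delta(\mu-\mu^{-1})$, giving $(\mu-\mu^{-1})^{q^d-1}=-\delta$ and $x^{(q^d-1)/2}=\delta(-\delta)=-1$. So $x$ is a nonsquare in $\mathbb{F}_{q^d}$, hence its norm is a nonsquare in $\F$; this is the first assertion.

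For the second assertion, let $d$ be odd. Since $1-(-s)^2=c\in\C$ and $g_{-s}(y)$ also has factors of degree $d$, we have $-s\in\B_d\cup\B'_d$; as $d$ is odd and $s\in\B_d$, disjointness of $\B_d$ and $\B'_d$ forces $-s\in\B'_d$, i.e.\ $-s=\tfrac12(\eta_1+\eta_1^{-1})$ with $\ord(\eta_1)=d$. For a zero $x'$ of $g_{-s}(y)$, the same parametrization gives $(\mu')^{2E}\in\{\eta_1,\eta_1^{-1}\}$ of order $d$, so $(\mu')^{2Ed}=1$, whence $v_2(\ord\mu')\le v_2(2Ed)=v_2(q^d-\delta)$; combining with $\ord((\mu')^2)\mid q^d-\delta$ (so the odd part of $\ord\mu'$ also divides $q^d-\delta$) yields $\ord\mu'\mid q^d-\delta$, so $(\mu')^{q^d}=(\mu')^{\delta}$, $(\mu'-\mu'^{-1})^{q^d-1}=\delta$, and $(x')^{(q^d-1)/2}=\delta^2=1$; thus $x'$ is a square in $\mathbb{F}_{q^d}$ and its norm is a square in $\F$. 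The main obstacle, I expect, is the $2$-adic bookkeeping in the previous paragraph: extracting $v_2(\ord\mu)=v_2(4Ed)$ from the lone fact that $\mu^{2E}$ has order $2d$, proving $v_2(q^d-\delta)=v_2(4Ed)-1$ uniformly over all odd $q$ and all parities of $d$, and tracking the sign $(\mu-\mu^{-1})^{q^d-1}=-\delta$ so that it cancels the leading $\delta$ to leave precisely $-1$ (and $+1$ in the $g_{-s}$ case).
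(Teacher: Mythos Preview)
Your argument is correct and takes a genuinely different route from the paper's proof.

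The paper proves Theorem~5.1 via the polynomial identity $g_s\bigl((2y-1)^2\bigr)=2g_t(y)g_{-t}(y)$ with $s=2t^2-1$. For $s\in\B_d$ it picks a square root $\delta$ of $\beta$ of order $4d$, sets $t=(\delta+\delta^{-1})/2\in\B_{2d}$, and observes that the zeros $v$ of $g_tg_{-t}$ then have degree $2d$; since every zero of $g_s$ is $x=(2v-1)^2$ and $2v-1\notin\F[x]$, each $x$ is a nonsquare in $\F[x]$. For $s\in\B'_d$ with $d$ odd it instead finds $t\in\B_d$ (via $t=(\beta^j+\beta^{-j})/2$ for an odd $j$ with $2j\equiv\pm1\pmod d$), so the $v$'s have degree $d$, $2v-1\in\F[x]$, and $x$ is a square.

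Your approach instead parametrizes each zero as $x=-(\mu-\mu^{-1})^2/4$, proves the key identity $s=\tfrac12(\mu^{2E}+\mu^{-2E})$ so that $\mu^{2E}$ has order $2d$ (respectively $d$ for $g_{-s}$, $d$ odd), and then determines $x^{(q^d-1)/2}$ by a clean $2$-adic comparison of $v_2(\ord\mu)$ with $v_2(q^d-\delta)$. The paper's argument is shorter and more structural (it never leaves the family of $g$-polynomials and reduces everything to a degree comparison), while yours is more computational but has the virtue of being entirely internal to the root-of-unity framework of Section~3 and of giving the quadratic character of each individual zero directly. One small point: when you write ``$-s\in\B'_d$, i.e.\ $-s=\tfrac12(\eta_1+\eta_1^{-1})$ with $\ord(\eta_1)=d$,'' the ``i.e.'' hides the (easy) fact that for odd $d$ the negative of an element of order $2d$ has order $d$; it would be cleaner to note explicitly that the roots of $Y^2+2sY+1$ are $-\beta_0,-\beta_0^{-1}$ and that $\ord(-\beta_0)=d$ since $-1=\beta_0^d$ and $\gcd(d+1,2d)=2$.
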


\begin{proof}
By \eqref{eq:1.3},
\[
G_t(y)/2 = \big(y^n +(1-y)^n\big)^2 - t^2.
\]
With the change of variable $y = (1+u)/2$,
\begin{equation}\label{eq:5.1}
\begin{split}
G_t(y)/2 &= \Big(\Big(\frac{1+u}{2}\Big)^n +
\Big(\frac{1-u}{2}\Big)^n\Big)^2 - t^2  \\
&=\frac{1}{4}f(u^2)^2-t^2=\frac{1}{4}\Big(f((2y-1)^2)^2-4t^2\Big),
\end{split}
\end{equation}
since $4^n=4$ in $\F$.
By \eqref{eq:5.1} and \eqref{eq:1.3},
\begin{equation}\label{eq:5.2}
g_s((2y-1)^2) = G_t(y),
\end{equation}
where $s = 2t^2-1$.

As $s \in \B_d$, we have $s = (\beta + \beta^{-1})/2$
for some $\beta$ of order $2d$.
Let $\delta$ be an element in $\F[i]$ of order $4d$ with $\delta^2 = \beta$.
Then \eqref{eq:5.2} holds with
$t: = (\delta + \delta^{-1})/2$, since $s = 2t^2 -1$.
While each zero  $x$ of $g_s$ has degree $d$ over $\F$,
each zero $v$ of $G_t$ has degree $2d$ over $\F$,
since $t \in \B_{2d}$.
In particular, $v \notin \F[x]$ for such $x,v$.
As $v$ runs through the $2E$ distinct zeros of $G_t$,
$(2v-1)^2$ runs twice through the $E$ distinct zeros of $g_s$.
Thus each zero $x$ of $g_s$  has the form $x = (2v-1)^2$.
It follows that $x$ must be a nonsquare in $\F[x]$, since $2v-1 \notin \F[x]$.
This proves that the norms of the zeros of $g_s(y)$
are nonsquares in $\F$ when $s \in \B_d$.

Finally, let $s \in \B'_d$ with $d$ odd, so that 
$s = -(\beta + \beta^{-1})/2$
for some $\beta$ of order $2d$.
It remains to show that the
norms of the zeros of $g_s(y)$ are squares in $\F$.
Let $j$ denote the odd integer in the set $\{(d+1)/2, (3d+1)/2 \}$.
Then \eqref{eq:5.2} holds with
$t: = (\beta^j + \beta^{-j})/2$, since $s = 2t^2 -1$.
Each zero  $x$ of $g_s$ has degree $d$ over $\F$,
and the same is true about each zero $v$ of $G_t$,
since $t \in \B_d$.
As $v$ runs through the $2E$ distinct zeros of $G_t$,
$(2v-1)^2$ runs twice through the $E$ distinct zeros of $g_s$.
Thus each zero $x$ of $g_s$  has the form $x = (2v-1)^2$.
Since $\F[x] \subset \F[v]$ and both fields have degree $d$ over $\F$,
we must have $2v-1 \in \F[x]$.
Therefore  $x$ is a square in $\F[x]$, which
proves that the norms of the zeros of $g_s(y)$
are squares in $\F$ when $s \in \B'_d$.
\end{proof}

For example, take $d=5$, $q \equiv \pm 1 \pmod {20}$, and $s = (1+\sqrt{5})/4$
in $\F$, for either choice of the square root.  Then $s \in \B_5$, so the
norms of the zeros of $g_s(y)$ are nonsquares
in $\F$, while the norms of the zeros of
$g_{-s}(y)$ are squares in $\F$. 
To illustrate with $q=19$ and $s \in \{12,17\} \subset \B_5$, we have
\begin{equation*}
\begin{split}
&g_{12}(y) = 2(N(y)-3)(N(y)-14), \quad g_{-12}(y) = 2(N(y)-1)(N(y)-16),\\
&g_{17}(y) = 2(N(y)-2)(N(y)-15), \quad g_{-17}(y) = 2(N(y)-6)(N(y)-11).
\end{split}
\end{equation*}
where $3,14,2,15$ are nonsquares modulo 19, while $1,16,6,11$ are squares.
For an example with $d$ even,
take $d=6$, $q=37$, and $s=26$.
Then
\[
2G_s(y)=(N(y)-2)(N(y)-5)(N(y)-14)(N(y)-20)(N(y)-29)(N(y)-32),
\]
where $2,5,14,20,29,32$ are all nonsquares modulo 37.

Finally, we return to the case $d=3$ which motivated this paper.
Define 
\[
G(y): = g_{-1/2}(y) g_{1/2}(y), \quad V = \{ v(v-3/4)^2 : v \in \F \}. 
\]
The monic irreducible cubic factors
$I_x(y)=N(y)-N(x)$ of $G(y)$ are given in 
Corollary 4.6. In the next theorem, we
characterize the set of constant terms of these irreducible factors.
Equivalently,  we characterize $U$, 
where $U$  denotes the set of norms $N(x)$ of the zeros $x$
of $G(y)$.  

\begin{thm}\label{Theorem 5.2}
We have  $U=T$, where $T$ is the complement of $V$ in $\F$. 
\end{thm}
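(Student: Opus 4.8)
I would prove $U = T$ by establishing the easy inclusion $U \subseteq T$ directly and then forcing equality with a cardinality count. For the inclusion: by Corollary 4.6 every monic irreducible factor of $G(y) = g_{-1/2}(y) g_{1/2}(y)$ has the form $I_x(y) = N(y) - N(x)$ with $N(y) = y(y-3/4)^2$, and this is an irreducible cubic over $\F$; an irreducible cubic has no zero in $\F$, so $N(x) \notin N(\F) = V$, i.e. $N(x) \in T$. Hence $U \subseteq T$, and it remains to check $|U| = |T|$.

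To compute $|U|$: the polynomial $G(y)$ has degree $2E$ with $E = (q - \rho(-1))/2$. Since $s = \pm 1/2 \ne \pm 1$, the zeros of $g_{1/2}(y)$ are distinct, the zeros of $g_{-1/2}(y)$ are distinct, and $g_{1/2}$ and $g_{-1/2}$ share no zero (a common zero $x$ would make $x^n + (1-x)^n$ equal to both $1/2$ and $-1/2$). So $G(y)$ has $2E$ distinct zeros, each of degree $3$ by Corollary 4.6, hence split into $2E/3$ Frobenius orbits; two zeros give the same value of $N$ iff they share the minimal polynomial $N(y) - N(x)$ iff they lie in the same orbit, so $|U| = 2E/3 = (q - \rho(-1))/3$.

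The crux is therefore to show $|V| = (2q + \rho(-1))/3$, which I would get by a fiber count. Let $a_i$ be the number of $m \in \F$ with exactly $i$ preimages under $N$, so $|V| = a_1 + a_2 + a_3$ and $a_1 + 2a_2 + 3a_3 = q$. First $a_2 = 2$: a value with a repeated preimage is a critical value, hence lies in $\{N(1/4), N(3/4)\} = \{1/16, 0\}$, and $N(y) = 0$ has preimages $\{0, 3/4\}$ while $N(y) - 1/16 = (y - 1/4)^2(y-1)$ has preimages $\{1/4,1\}$ (the third root is $1$ by Vieta, and $1 \ne 1/4$ since $3 \ne 0$ in $\F$). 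Next set $P := \#\{(u,v) \in \F^2 : u \ne v,\ N(u) = N(v)\}$; grouping by the common value gives $P = 2a_2 + 6a_3$. On the other hand, dividing $N(u) - N(v)$ by $u - v$,
\[
P = \#\bigl\{(u,v) \in \F^2 : u \ne v,\ u^2 + uv + v^2 - \tfrac32(u+v) + \tfrac9{16} = 0 \bigr\}.
\]
Completing the square, the conic $u^2 + uv + v^2 - \tfrac32(u+v) + \tfrac9{16} = 0$ becomes $X^2 + \tfrac34 Y^2 = \tfrac3{16}$ under the invertible affine substitution $X = u + \tfrac v2 - \tfrac34$, $Y = v - \tfrac12$; this conic has $q - \rho(-\tfrac34) = q - \rho(-1)$ points over $\F$ by the standard count, using $\rho(3) = 1$ (valid since $q \equiv \pm 1 \pmod{12}$). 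The diagonal $u = v$ meets the conic where $X = \tfrac32 Y$, forcing $Y = \pm \tfrac14$, i.e. $u \in \{1/4, 3/4\}$; removing these two points gives $P = q - \rho(-1) - 2$. Combining $P = 2a_2 + 6a_3$, $a_2 = 2$, and $a_1 + 2a_2 + 3a_3 = q$ yields $a_3 = (q - \rho(-1) - 6)/6$ and
\[
|V| = a_1 + a_2 + a_3 = q - a_2 - 2a_3 = \frac{2q + \rho(-1)}{3},
\]
hence $|T| = q - |V| = (q - \rho(-1))/3 = |U|$, and together with $U \subseteq T$ this gives $U = T$.

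The main obstacle is the determination of $|V|$: showing $N$ has exactly the two critical values $0$ and $1/16$, each with one simple and one double preimage, and evaluating the conic point count — this is precisely where the hypothesis $q \equiv \pm 1 \pmod{12}$ is used, through $\rho(3) = 1$. Everything else is bookkeeping with the standard character-sum count of points on an affine conic.
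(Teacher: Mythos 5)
Your proof is correct, and its skeleton (establish $U\subseteq T$ from irreducibility, then force equality by showing $|U|=|T|$) is the same as the paper's; the genuine difference is in how you compute $|V|$. The paper determines the fiber sizes of $N$ pointwise: it divides $w(w-3/4)^2-m(m-3/4)^2$ by $w-m$ and reads off from the discriminant $3m(1-m)$ of the resulting quadratic in $w$ that the fiber through $m$ has size $3$ or $1$ according as $\rho(m(1-m))$ is $1$ or $-1$ (with the two exceptional values $0$ and $1/16$ handled separately), and then evaluates $\sum_m\rho(m(1-m))=-\rho(-1)$ by the Jacobsthal-type identity \cite[Theorem 2.1.2]{BEW}. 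You instead run a second-moment argument: you count ordered pairs $(u,v)$, $u\ne v$, with $N(u)=N(v)$ globally as the point count of the affine conic $u^2+uv+v^2-\tfrac32(u+v)+\tfrac9{16}=0$ minus its two diagonal points, and solve for the fiber-size distribution $(a_1,a_2,a_3)$. The two routes are close cousins --- your conic is exactly the quotient $\bigl(N(u)-N(v)\bigr)/(u-v)$ that the paper analyzes fiberwise, and both counts ultimately reduce to the same quadratic character sum, with $\rho(3)=1$ entering in the same place --- but yours avoids the explicit fiber-size dichotomy in favor of standard conic point counts, at the cost of the small linear-algebra step recovering $a_3$ from $P$. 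Both are complete; your verification that $a_2=2$ (no value other than the two critical values $0,1/16$ has exactly two preimages, since a cubic with two roots in $\F$ has all three there) and your explicit check that $G$ has $2E$ distinct zeros are details the paper passes over more quickly, so nothing is missing.
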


\begin{proof}
Consider the list
\[
L:=\langle m(m-3/4)^2:  m \in \F \rangle,
\]
which has $q$ entries in $\F$.
The entry $0$ occurs twice in $L$ (for $m=0$ and $m=3/4$) and
the entry $1/16$ also occurs twice (for $m=1/4$ and $m=1$).
Solving the equation
\mbox{$w(w-3/4)^2 = m(m-3/4)^2$}
for $w$, we see that of the remaining
entries $m(m-3/4)^2$ in $L$,
those  with $\rho(m(1-m))=-1$ occur once in $L$, and those with
$\rho(m(1-m))=1$ occur thrice.
The number of distinct entries in $L$ is therefore
\[
|V| = 2 +\frac{1}{2}\sum(1-\rho(m(1-m))) + \frac{1}{6}\sum (1+\rho(m(1-m))),
\]
where the sums are over all $m \in \F$ except $m=0,1/4,3/4,1$.
The first term $2$ on the right side cancels out when the sums are 
taken over all $m$.
By \cite[Theorem 2.1.2]{BEW},
\[
\sum \limits_{m \in \F} \rho(m(1-m)) = -\rho(-1),
\]
so that
\[
|V| = (2q + \rho(-1))/3, \quad  |T| = (q-\rho(-1))/3.
\]

Write $E:=(q-\rho(-1))/2$, and let $x$ denote any
of the $2E$ zeros of $G(y)$.
Since the $2E$ zeros are distinct,
$G(y)$ has $2E/3$ distinct factors of the form $I_x(y)$.
In particular, $|U| = 2E/3 = |T|$.
If one of the $2E/3$ values of $N(x) \in U$ were in $V$,
i.e., if $N(x) = N(v)$ for some $v \in \F$, then we'd have the contradiction
that $I_x(y)$ has a linear factor $y-v$.   Thus $U \subset T$.
Since $|U|=|T|$, this proves that $U = T$.
\end{proof}

\end{document}